\documentclass[12pt]{article}
\topmargin=-1cm
\textwidth=16cm
\textheight=23cm
\oddsidemargin=-0.5cm
\evensidemargin=0.5cm

\usepackage{amsmath,amssymb,amsthm,bm}
\usepackage{enumitem}
\usepackage{color}
\usepackage{mathtools}




\newtheorem{dfe}{Definition}
[section]
\newtheorem{lem}[dfe]{Lemma}
\newtheorem{thm}[dfe]{Theorem}

\newtheorem{prop}[dfe]{Proposition}
\newtheorem{cor}[dfe]{Corollary}

\newtheorem{Rem}{Remark}

\makeatletter

\@addtoreset{equation}{section}
\makeatother

\makeatletter

\makeatother

\newcommand{\C}{\mathbb{C}}

\newcommand{\Z}{\mathbb{Z}}
\newcommand{\N}{\mathbb{N}}
\newcommand{\F}{\mathbb{F}}
\newcommand{\Span}{\mathop{\mathrm{Span}}\nolimits}

\newcommand{\End}{\mathop{\mathrm{End}}\nolimits}

\newcommand{\qbinom}[2]{\genfrac{[}{]}{0pt}{}{#1}{#2}} 
\newcommand{\rank}{\mathop{\mathrm{rank}}\nolimits}

\DeclarePairedDelimiter{\set}{\lbrace}{\rbrace}
\DeclarePairedDelimiter{\card}{\lvert}{\rvert}

\newcommand{\cB}{\mathcal{B}}

\newcommand{\alphaalt}{s}
\newcommand{\betaalt}{t}

\begin{title}
{Bivariate $Q$-polynomial structures for the nonbinary Johnson scheme
and the association scheme obtained from attenuated spaces}
\end{title}
\author{
    Eiichi Bannai\thanks{Faculty of Mathematics, Kyushu University (emeritus), Japan},
    Hirotake Kurihara\thanks{Department of Applied Science, Yamaguchi University, 2-16-1 Tokiwadai, Ube 755-8611, Japan}, 
    Da Zhao\thanks{School of Mathematics, East China University of Science and Technology, Shanghai 200237, China}, 
    Yan Zhu\thanks{College of Science, University of Shanghai for Science and Technology, Shanghai
    200093, China}}
\begin{document}
\maketitle

\begin{abstract}
The study of $P$-polynomial association schemes (distance-regular graphs) and 
$Q$-polynomial association schemes, and in particular $P$- and $Q$-polynomial association schemes, has been 
a central theme not only in the theory of association schemes but also in the whole study of algebraic combinatorics in general. 
Leonard's theorem (1982) 
says that the spherical functions (or the character tables) of $P$- and $Q$-polynomial  
association schemes are described by Askey-Wilson orthogonal 
polynomials or their relatives. These polynomials are one-variable 
orthogonal polynomials. It seems that the new attempt to 
define and study higher rank $P$- and $Q$-polynomial association schemes had been hoped for, but had gotten only limited success. 
The first very successful attempt was initiated recently by Bernard-Cramp\'{e}-d'Andecy-Vinet-Zaimi, and then followed by Bannai-Kurihara-Zhao-Zhu. 
The general theory and some explicit examples of families of higher rank (multivariate) $P$- and/or $Q$-polynomial association 
schemes have been obtained there. The main purpose of the present paper is 
to prove that some important families of association schemes are shown to be bivariate $Q$-polynomial. 
Namely, we show that all the nonbinary Johnson association schemes 
and all the attenuated space association schemes are bivariate $Q$-polynomial. 
It should be noted that the parameter restrictions needed in the previous papers are 
completely lifted in this paper. Our proofs are done by explicitly calculating the 
Krein parameters of these association schemes. At the end, we mention some 
speculations and indications of what we can expect in the future study.  
\end{abstract}
\textbf{Key words}: multivariate polynomial association schemes;
nonbinary Johnson schemes;
association schemes obtained from attenuated spaces;
Leonard pairs.

\tableofcontents

\section{Introduction}
\label{sec:intro}
The study of $P$-polynomial association schemes (distance-regular graphs) and 
$Q$-polynomial association schemes has been one of the central problems in the study of 
algebraic combinatorics, in particular in the theory of association schemes. The classification of $P$- and $Q$-polynomial association schemes was very much looked for, and their connection with the theory 
of orthogonal polynomials has been very important since it was shown by Leonard~\cite{Leonard1982} that 
the spherical functions (and character tables) of $P$- and $Q$-polynomial 
association schemes were described by using Askey-Wilson orthogonal 
polynomials including their special or limiting cases
(See \cite{BBIT2021,bi,BCN1989,Terwilliger2001,Terwilliger2021}).
Note that 
the orthogonal polynomials attached to $P$- and $Q$-polynomial association 
schemes are one-variable polynomials. It seems that many researchers wanted to define 
higher rank $P$- and $Q$-polynomial association schemes, but 
only some special cases have been studied, see Mizukawa-Tanaka~\cite{MT2004}, 
Gasper-Rahman~\cite{GR2007}, 
Iliev-Terwilliger~\cite{IT2012} and many others.
On the other hand, one-variable Askey-Wilson orthogonal polynomials have 
been generalized to 
multivariable orthogonal polynomials in various ways and also very much studied, purely as orthogonal polynomials. See, for example, many papers mentioned in the Introduction and the References in Bernard-Cramp\'{e}-d'Andecy-Vinet-Zaimi~\cite{bi}.
However, their connection with the theory of association schemes, in particular, higher rank $P$- and $Q$-polynomial association schemes, was not very much revealed, except for the special 
cases studied in the papers referenced above~\cite{MT2004,GR2007,IT2012}, among others.
The reason for this was that 
the good definition of higher rank (i.e., multivariate) $P$-polynomial or $Q$-polynomial 
association schemes were missing and explicit good examples of 
higher rank $P$-polynomial or $Q$-polynomial 
association schemes were generally missing. So, it was a very pleasant 
surprise when we saw the paper by Bernard-Cramp\'{e}-d'Andecy-Vinet-Zaimi~\cite{bi}, where the bivariate $P$-polynomial association schemes (and 
the bivariate $Q$-polynomial association schemes) of type $(\alpha, \beta)$ 
were defined and showed that some good explicit examples do exist. Motivated by 
their paper \cite{bi}, we started to think of this line of study, and we defined 
multivariable $P$-polynomial (and $Q$-polynomial) association schemes 
generalizing their concept 
for any monomial order. Then, the authors \cite{BKZZ} obtained more examples of (family of) 
multivariate $P$-polynomial association schemes and 
also the multivariate $Q$-polynomial association schemes. 

The purpose of the 
present paper is to show that the two important families are actually 
$Q$-polynomial association schemes in the sense of \cite{BKZZ} with respect to the graded lexicographic order. In most cases, we consider multivariate 
$P$-polynomial (or $Q$-polynomial) association schemes with respect to 
the graded lexicographic order.
Namely, the main results of the present paper are 
the following Theorem~\ref{thm:main1} and Theorem~\ref{thm:main2} to be described below. 
It was shown by Cramp\'{e}-Vinet-Zaimi-Zhang~\cite{biQ} that nonbinary Johnson 
association schemes are bivariate $Q$-polynomial in their sense~\cite{bi} if some restriction 
is imposed on the parameters. Our proof is basically along 
their lines and to calculate the Krein parameters very explicitly. 
However, we can completely lift the parameter restrictions in \cite{biQ}.
Another case was on association schemes coming from attenuated 
spaces. The parameters and spherical functions of this family of 
association schemes are obtained by Wang-Guo-Li~\cite{WGL2010} and 
Kurihara~\cite{Kurihara2013} 
already, but to show that these association schemes are actually bivariate $Q$-polynomial is  
highly non-trivial, as we need to compute the Krein parameters explicitly. We show that they are indeed bivariate $Q$-polynomial, again lifting 
the parameter restrictions completely. 

\begin{thm}
    \label{thm:main1}
    The nonbinary Johnson scheme $J_r(n,k)$ is a bivariate $Q$-polynomial association scheme with respect to the graded lexicographic order $\le_{\mathrm{grlex}}$.
\end{thm}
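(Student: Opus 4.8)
The plan is to verify directly the definition of a bivariate $Q$-polynomial association scheme from \cite{BKZZ}, for the order $\le_{\mathrm{grlex}}$, by fixing the right labeling of the primitive idempotents and then computing the relevant Krein parameters in closed form, as the introduction indicates our proofs do. Recall that the classes of $J_r(n,k)$ are naturally indexed by pairs $(s,t)$ of nonnegative integers with $s+t\le k$ and $s\le n-k$: for two points $x,y$ (length-$n$ words of Hamming weight $k$), the index $(s,t)$ records $s=\lvert\mathrm{supp}(x)\setminus\mathrm{supp}(y)\rvert$ and $t=$ the number of coordinates in $\mathrm{supp}(x)\cap\mathrm{supp}(y)$ on which $x$ and $y$ disagree, so that $\Delta:=\{(s,t):s,t\ge 0,\ s+t\le k,\ s\le n-k\}$ is a (possibly truncated) triangle, degenerating to the segment $t=0$ in the binary case. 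Since the scheme is symmetric and commutative it has $\lvert\Delta\rvert$ primitive idempotents, which we relabel $\{E_{(s,t)}\}_{(s,t)\in\Delta}$ with $E_{(0,0)}=\lvert X\rvert^{-1}J$; the multiplicities and, crucially, the second eigenmatrix $Q$ (the spherical functions) are available from Wang--Guo--Li~\cite{WGL2010} and Kurihara~\cite{Kurihara2013}, where each spherical function factors as a product of a one-variable Hahn-type polynomial in the ``Johnson direction'' and a Krawtchouk/Hahn-type polynomial in the ``coloring direction'' whose parameters are coupled to the first index. The two generators will be $E_{(1,0)}$ and $E_{(0,1)}$, the idempotents of $\le_{\mathrm{grlex}}$-minimal positive bidegree.

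Everything then reduces to the Krein parameters attached to Hadamard ($\circ$) multiplication by these two generators,
\[
E_{(1,0)}\circ E_{(s,t)}=\sum_{(u,v)\in\Delta}q^{(u,v)}_{(1,0),(s,t)}E_{(u,v)},\qquad
E_{(0,1)}\circ E_{(s,t)}=\sum_{(u,v)\in\Delta}q^{(u,v)}_{(0,1),(s,t)}E_{(u,v)},
\]
and I would prove two things. First, a \emph{support} statement: $q^{(u,v)}_{(1,0),(s,t)}=0$ unless $(u,v)\le_{\mathrm{grlex}}(s{+}1,t)$, and $q^{(u,v)}_{(0,1),(s,t)}=0$ unless $(u,v)\le_{\mathrm{grlex}}(s,t{+}1)$, with the precise shape of the nonzero region recording the type $(\alpha,\beta)$ in the sense of \cite{bi,BKZZ}. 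Second, a \emph{nondegeneracy} statement: the boundary coefficients $q^{(s+1,t)}_{(1,0),(s,t)}$ and $q^{(s,t+1)}_{(0,1),(s,t)}$ are nonzero whenever $(s{+}1,t)$, respectively $(s,t{+}1)$, lies in $\Delta$. Together these say that the idempotents are generated from $E_{(1,0)},E_{(0,1)}$ by a recurrence that is lower-triangular and one-step in each direction with respect to $\le_{\mathrm{grlex}}$, hence inductively that $\lvert X\rvert E_{(s,t)}$ is a polynomial in $\lvert X\rvert E_{(1,0)},\lvert X\rvert E_{(0,1)}$ under $\circ$ with $\le_{\mathrm{grlex}}$-leading monomial $x^{s}y^{t}$ --- which is precisely bivariate $Q$-polynomiality. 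For the computation I would use the standard formula expressing a Krein parameter as a weighted sum over the classes of products of three entries of $Q$, substitute the product form of the spherical functions, and collapse the sum in two stages: the coloring-direction factors by the orthogonality of their family, then the Johnson-direction factors by the orthogonality and three-term recurrence of Hahn polynomials; the vanishing in the support statement falls out because each triple sum is the pairing, in the relevant orthogonality, of a product of two polynomials of controlled degree against a higher-degree orthogonal polynomial.

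The step I expect to be the main obstacle is establishing the support statement uniformly near the boundary of $\Delta$ and, in tandem, the nonvanishing of the boundary Krein parameters for \emph{all} admissible $n,k,r$ --- this is exactly where the parameter restrictions entered in \cite{biQ}. The two polynomial families degenerate along the edges $s+t=k$, $s=0$, and $t=0$ (the lengths and parameters on which they are orthogonal vary with $s$ and $t$), so the two-stage orthogonality identities that kill the higher Krein parameters must be proved without assuming one stays in the generic interior, and one must rule out accidental vanishing of the leading coefficients for small or extremal parameters. I would handle this by carrying the Krein parameters as fully factored products and tracking the zero pattern of each factor, rather than relying on any univariate $Q$-polynomial reduction; this is what removes the hypotheses of \cite{biQ}. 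Once both statements are in place, comparing the resulting one-step recurrences with the definition in \cite{BKZZ} identifies the type $(\alpha,\beta)$ and finishes the proof; an analogous strategy, with the relevant $q$-deformed polynomial families, will then apply to Theorem~\ref{thm:main2}.
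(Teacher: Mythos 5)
Your proposal matches the paper's proof in essentially every respect: reduce to a support-plus-nondegeneracy statement on the Krein numbers $q^{\alphaalt\betaalt}_{10,ij}$ and $q^{\alphaalt\betaalt}_{01,ij}$ (Proposition~\ref{prop:nonbinaryQ} via Proposition~\ref{prop:Q-TFAE}), evaluate them with the triple-product formula \eqref{eq:Krein2} using the factored spherical functions \eqref{eq:nonbinaryJohnson_q}, collapse the inner sum by Hamming-scheme (Krawtchouk) orthogonality and the outer sum by the Hahn contiguity relation of Lemma~\ref{lem:Hahn} together with Johnson-scheme orthogonality, and keep everything in fully factored form \eqref{eq:nonbinary_Krein_1}--\eqref{eq:nonbinary_Krein_7} so the boundary Krein numbers are visibly nonzero for all admissible $n,k,r$. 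This is exactly the paper's argument, so no further comparison is needed.
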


\begin{thm}
    \label{thm:main2}
    The association scheme obtained from attenuated spaces is a bivariate $Q$-polynomial association scheme with respect to the graded lexicographic order $\le_{\mathrm{grlex}}$.
\end{thm}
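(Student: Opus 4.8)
The plan is to establish Theorem~\ref{thm:main2} by the method already used for Theorem~\ref{thm:main1}: an explicit computation of the Krein parameters of the attenuated space scheme. Recall the scheme: fix $V=\F_q^{\,n+l}$ and an $l$-dimensional subspace $W\le V$, let $X$ be the set of $k$-dimensional subspaces $U$ of $V$ with $U\cap W=\zerovec$, and let the relations $R_{(i,j)}$ and the primitive idempotents $E_{(i,j)}$ be indexed by the pair $(i,j)$ running over the staircase domain $\cD\subseteq\Z_{\ge0}^2$ attached to this scheme in Wang--Guo--Li~\cite{WGL2010} (here $l=0$ recovers the Grassmann scheme $\mathrm{Gr}(k,n)$ and $k=n$ the bilinear forms scheme $\F_q^{\,n\times l}$, both classical univariate $Q$-polynomial schemes). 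To prove that this is a bivariate $Q$-polynomial scheme with respect to $\le_{\mathrm{grlex}}$ in the sense of \cite{BKZZ}, I must single out the two ``generating'' idempotents $E_{(1,0)}$ and $E_{(0,1)}$ and show that in the expansions
\begin{align*}
  E_{(1,0)}\circ E_{(i,j)}&=\frac{1}{|X|}\sum_{(s,t)\in\cD}q_{(1,0),(i,j)}^{(s,t)}\,E_{(s,t)},\\
  E_{(0,1)}\circ E_{(i,j)}&=\frac{1}{|X|}\sum_{(s,t)\in\cD}q_{(0,1),(i,j)}^{(s,t)}\,E_{(s,t)}
\end{align*}
the supports are contained in $\{(s,t)\in\cD:(s,t)\le_{\mathrm{grlex}}(i+1,j)\}$ and in $\{(s,t)\in\cD:(s,t)\le_{\mathrm{grlex}}(i,j+1)\}$ respectively, with the leading Krein parameters $q_{(1,0),(i,j)}^{(i+1,j)}$ and $q_{(0,1),(i,j)}^{(i,j+1)}$ nonzero for every prime power $q$, all admissible $k,n,l$ and all $(i,j)\in\cD$. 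Equivalently, each $E_{(i,j)}$ is a nonzero scalar times the $\le_{\mathrm{grlex}}$-leading term of the Hadamard polynomial $E_{(1,0)}^{\circ i}\circ E_{(0,1)}^{\circ j}$, so that $E_{(1,0)}$ and $E_{(0,1)}$ generate the Bose--Mesner algebra under $\circ$ with grading compatible with $\le_{\mathrm{grlex}}$.

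The input is the spectral data. From \cite{WGL2010,Kurihara2013} I would import the valencies $k_{(i,j)}$, the multiplicities $m_{(i,j)}$, the first eigenmatrix $P$, and the spherical functions; the latter give the entries of the second eigenmatrix $Q$ as products of Gaussian binomial coefficients times a terminating basic hypergeometric series of $q$-Hahn / dual $q$-Hahn type, the product shape reflecting the fibration of $X$ over $\mathrm{Gr}(k,n)$ with $\F_q^{\,k\times l}$-affine fibres, which also suggests that the two index directions track the ``Grassmann'' and ``bilinear-forms'' parts separately. Substituting these $Q$-entries into the standard expression for the Krein parameters as a weighted sum of triple products of entries of $Q$, and specializing the second lower pair to $(1,0)$ and to $(0,1)$, reduces each relevant $q_{(1,0),(i,j)}^{(s,t)}$ and $q_{(0,1),(i,j)}^{(s,t)}$ to a single (at worst two-index) basic hypergeometric sum, which I would evaluate in closed form by the $q$-Chu--Vandermonde and $q$-Saalsch\"utz summation formulas, exactly as in the classical treatment of the Johnson and Grassmann schemes. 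From the closed forms one reads off simultaneously the supports of the two Hadamard products and the explicit values of the leading coefficients; verifying that $\cD$ has the required staircase shape and that the leading coefficients never vanish then completes the argument. It is worth cross-checking the outcome against the known univariate $Q$-polynomial structures in the degenerate cases $l=0$ and $k=n$, which also pins down which of the two index directions must come first in $\le_{\mathrm{grlex}}$.

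The main obstacle is exactly this closed-form evaluation. Unlike the eigenvalues and spherical functions, the Krein parameters of the attenuated space scheme are not recorded in the literature; the sums are genuinely two-index and do not collapse as neatly as in the one-variable classical cases; and --- this is precisely the point on which the paper improves over \cite{biQ,BKZZ} --- one must prove that the leading coefficients $q_{(1,0),(i,j)}^{(i+1,j)}$ and $q_{(0,1),(i,j)}^{(i,j+1)}$ are nonzero for \emph{every} prime power $q$, \emph{every} admissible $k,n,l$ and \emph{every} $(i,j)\in\cD$, i.e.\ that no accidental cancellation occurs among the many Gaussian-binomial prefactors. I expect this uniform nonvanishing, together with the precise determination of the $\le_{\mathrm{grlex}}$-neighbourhoods, to be the technical heart of the proof, paralleling but extending the computation behind Theorem~\ref{thm:main1}.
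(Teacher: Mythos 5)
Your overall strategy is the paper's: reduce Theorem~\ref{thm:main2} via Proposition~\ref{prop:Q-TFAE} to showing that the Krein numbers $q^{\alphaalt\betaalt}_{10,ij}$ and $q^{\alphaalt\betaalt}_{01,ij}$ are supported on pairs $(\alphaalt,\betaalt)\le_{\mathrm{grlex}}(i,j)+\epsilon$ with nonvanishing leading terms, by plugging the factored second eigenmatrix \eqref{eq:attenuated_Q} into \eqref{eq:Krein2}. You also correctly identify the delicate point (uniform nonvanishing of $q^{i+1,j}_{10,ij}$ and $q^{i,j+1}_{01,ij}$ with no parameter restrictions) and the correct supports.

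Where you diverge is at the evaluation of the resulting double sum, and this is where your plan has a concrete gap. The sums are weighted \emph{triple} products of orthogonal polynomials with \emph{shifted} parameters ($Q_j(n-i,m-i;q;y)$ against $Q_\betaalt(n-\alphaalt,m-\alphaalt;q;y)$), so they are linearization problems, not single balanced series; $q$-Chu--Vandermonde and $q$-Saalsch\"utz do not apply to them directly, and ``exactly as in the classical treatment'' hides the entire difficulty. The paper instead closes the computation with two structural devices you do not mention: (1) the inner sum over $x$ is recognized, via \eqref{eq:Krein2} applied to $H_q(m-y,l)$ and self-duality, as $m^{H_q(m-y,l)}_\alphaalt q^\alphaalt_{1i}(H_q(m-y,l))$, which immediately forces $\alphaalt\in\{i-1,i,i+1\}$ and injects the $y$-dependence only through $\qbinom{m-y}{\alphaalt}$ and $q^{m-y}$; (2) the outer sum over $y$ is then collapsed by a contiguity relation for $q$-Hahn polynomials (Lemma~\ref{lem:qHahn}, the $q$-analogue of \eqref{eq:Crampe}), which rewrites $Q_\betaalt(n-i\pm1,m-i\pm1;q;y)$ as a two-term combination of $Q_\betaalt$ and $Q_{\betaalt-1}$ at parameters $(n-i,m-i)$, after which the Grassmann orthogonality \eqref{eq:Krein3} (and, for $\alphaalt=i$, the identity \eqref{eq:q^-x} converting the stray $q^{m-y}$ into $Q_1(n-i,m-i;q;y)$) reduces everything to Kronecker deltas and the known one-variable Krein numbers \eqref{eq:Grassmann_q}. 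Lemma~\ref{lem:qHahn} is the key new ingredient and itself requires proof; without it, or an equivalent connection-coefficient identity, your summation step does not go through as described.
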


We hope and expect that the techniques used in the present paper will be applied to 
other families, in particular for association schemes on (not necessarily maximal) 
isotropic spaces (see Stanton~\cite{Stanton1980}) and generalized Johnson association schemes 
in the sense of Ceccherini-Silberstein, Scarabotti and Filippo~\cite{CST2006}, but the technical difficulties 
are so far beyond our ability.
We want to come back to this question in the near future.
Finally, we note that many examples of our 
multivariate $P$- and $Q$-polynomial association schemes actually 
become some explicit examples of Problem 7.1 in Iliev-Terwilliger~\cite{IT2012}. 
Therefore, we believe this will pave the way for research in new directions.

Now we will provide further details regarding the content of this paper.
In Section~\ref{sec:2}, we will review the definitions of multivariate $P$- or $Q$-polynomial association schemes in the sense of \cite{BKZZ}, 
building upon the work of Bernard et al.~\cite{bi}.
Section~\ref{sec:nonbinary} will present relevant facts concerning nonbinary Johnson schemes.
The proof of Theorem~\ref{thm:main1} will be provided in Section~\ref{sec:proofThm1}.
Section~\ref{sec:attenuated} will discuss association schemes obtained from attenuated spaces.
The proof of Theorem~\ref{thm:main2} will be presented in Section~\ref{sec:proofThm2}.
Furthermore, in Section~\ref{sec:multiLeonard}, we will explore the relationship between nonbinary Johnson schemes, association schemes derived from attenuated spaces, and $A_2$-Leonard pairs.

\section{Multivariate $P$-polynomial and/or $Q$-polynomial 
association schemes}
\label{sec:2}

In this section, we briefly review the definition and properties of multivariate $P$-polynomial and/or $Q$-polynomial association schemes introduced by Bannai-Kurihara-Zhao-Zhu~\cite{BKZZ}.
Please refer to \cite{BKZZ} for more details.
Note that the definition of bivariate $P$-polynomial and/or $Q$-polynomial association schemes was originally introduced by Bernard-Cramp\'{e}-d'Andecy-Vinet-Zaimi~\cite{bi},
and Bannai et al.~\cite{BKZZ} extended it to the multivariate case.

\subsection{Association schemes}
\label{sec:AS}
We begin by recalling the concept of association schemes.
The reader is referred to Bannai-Bannai-Ito-Tanaka~\cite{BBIT2021} and Bannai-Ito~\cite{BI1984} for details.
Let $X$ and $\mathcal{I}$ be finite sets
and let $\mathcal{R}$ be a surjective map from $X\times X$ to $\mathcal{I}$.
For $i\in \mathcal{I}$, we put $R_i=\mathcal{R}^{-1}(i)$,
i.e., $R_i=\{(x,y)\in X\times X\mid \mathcal{R}(x,y)=i\}$.
Let $M_X(\C)$ be the $\C$-algebra of complex matrices with rows and columns indexed by $X$.
The \emph{adjacency matrix} $A_i$ of $i\in \mathcal{I}$
is defined to be the matrix in $M_X(\C)$ whose $(x,y)$ entries are
\[
(A_i)_{xy}=
\begin{cases}
    1 & \text{if $\mathcal{R}(x,y)=i$,}\\   
    0 & \text{otherwise.}
\end{cases}    
\]
It is obvious that
\begin{enumerate}[label=$(\mathrm{A}\arabic*)$]
    \item $\sum_{i\in \mathcal{I}} A_i =J_X$, where $J_X$ is the all-one matrix of $M_X(\C)$. \label{AS:Hadamard}
\end{enumerate}
A pair $\mathfrak{X}=(X,\{R_i\}_{i\in \mathcal{I}})$ (or simply $(X,\mathcal{R})$)
is called a \emph{commutative association scheme} if
$\mathfrak{X}$ satisfies the following conditions:
\begin{enumerate}[label=$(\mathrm{A}\arabic*)$]
    \setcounter{enumi}{1}
    \item there exists $i_0 \in \mathcal{I}$ such that $A_{i_0}=I_X$, where $I_X$ is the identity matrix of $M_X(\C)$; \label{AS:I}
    \item for each $i\in \mathcal{I}$, there exists $i'\in \mathcal{I}$ such that $A_i^T=A_{i'}$, where $A_i^T$ denotes the transpose of $A_i$; \label{AS:transpose}
    \item for each $i,j\in \mathcal{I}$,
    \[
    A_i A_j = \sum_{k\in \mathcal{I}} p^k_{ij} A_k   
    \]
    holds.
    The constant $p^k_{ij}$ is called the \emph{intersection number}; \label{AS:pijk}
    \item for $i,j,k\in \mathcal{I}$, $p^k_{ij}=p^k_{ji}$ holds, i.e., $A_i A_j = A_j A_i$ holds.\label{AS:commutative}
\end{enumerate}
Moreover, if an association scheme $\mathfrak{X}=(X,\{R_i\}_{i\in \mathcal{I}})$ satisfies 
\begin{enumerate}[label=$(\mathrm{A}\arabic*)$]
    \setcounter{enumi}{5}
\item for each $i\in \mathcal{I}$, $i=i'$ holds, \label{AS:symmetric}
\end{enumerate}
then $\mathfrak{X}$ is called \emph{symmetric}.
If the cardinality $|\mathcal{I}|$ of $\mathcal{I}$ is equal to $d+1$,
then $\mathfrak{X}$ is called of class $d$.

We also use the notation $\mathfrak{X}=(X,\{A_i\}_{i\in \mathcal{I}})$
to denote association schemes with the adjacency matrices $\{A_i\}_{i\in \mathcal{I}}$.
Let $\mathfrak{A}=\Span_\C \{ A_i \}_{i\in \mathcal{I}}$.
By \ref{AS:I} and \ref{AS:pijk}, $\mathfrak{A}$ becomes a subalgebra of $M_X(\C)$.
The algebra $\mathfrak{A}$ is called the \emph{Bose-Mesner algebra} of $\mathfrak{X}$.
By \ref{AS:Hadamard}, $\{A_i\}_{i\in \mathcal{I}}$ is a basis of $\mathfrak{A}$,
i.e., $\dim_{\C} \mathfrak{A}=|\mathcal{I}|$.

By \ref{AS:commutative}, $\mathfrak{A}$ has another basis  
$\{E_j\}_{j\in \mathcal{J}}$ consisting of the primitive idempotents of $\mathfrak{A}$,
where $\mathcal{J}$ is a finite set
and there exists $j_0\in \mathcal{J}$ such that $E_{j_0}=\frac{1}{|X|}J_X$.
Since $\{A_i\}_{i\in \mathcal{I}}$ and $\{E_j\}_{j\in \mathcal{J}}$ are bases of $\mathfrak{A}$,
$|\mathcal{I}|=|\mathcal{J}|$ holds.
By \ref{AS:Hadamard}, $\mathfrak{A}$ is closed under entrywise multiplication,
which product is denoted by $\circ$ and called the \emph{Hadamard product}.
Then for $i,j,k\in \mathcal{J}$,
there exists a real number (in fact, a nonnegative number) $q^k_{ij}$
such that $(|X| E_i)\circ (|X|E_j)=\sum_{k\in \mathcal{J}}q^k_{ij}|X|E_k$,
and $q^k_{ij}$ are called the \emph{Krein numbers} of $\mathfrak{X}$.

The entries of the \emph{first eigenmatrix} $P:=(P_i(j))_{j\in \mathcal{J}, i\in \mathcal{I}}$
and the \emph{second eigenmatrix} $Q:=(Q_j(i))_{i\in \mathcal{I}, j\in \mathcal{J}}$ of $\mathfrak{X}$
are defined by
\[
A_i = \sum_{j\in \mathcal{J}}P_i(j)E_j \  
\text{and}\  
|X| E_j = \sum_{i\in \mathcal{I}}Q_j(i)A_i,
\]
respectively.
Note that $P_i(j)$ and $Q_j(i)$ are complex numbers in general,
and if $\mathfrak{X}$ is symmetric, then $P_i(j)$ and $Q_j(i)$ are real numbers.
It is known that $k_i:= P_{i}(j_0)$ and $m_j:=Q_{j}(i_0)$ are positive integers,
and $k_i$ and $m_j$ are called the \emph{valencies} and the \emph{multiplicities} of $\mathfrak{X}$, respectively.
Also one can check easily that $P_{i_0}(j)=1$ and $Q_{j_0}(i)=1$ hold.
Then the following formula of the Krein number holds:
\begin{equation}
    \label{eq:Krein}
    q_{ij}^k=\frac{m_i m_j}{|X|}\sum_{l\in \mathcal{I}} \frac{1}{k_l^2} P_l (i)P_l (j)\overline{P_l (k)}
\end{equation}
for $i,j,k\in \mathcal{J}$.
Here $\overline{P_l (k)}$ denotes the complex conjugate of $P_l (k)$.
By $P_l (i)/k_l=\overline{Q_i(l)}/m_i$, the above formula \eqref{eq:Krein} can be rewritten as
\begin{equation}
    \label{eq:Krein2}
    q_{ij}^k=\frac{1}{|X|m_k}\sum_{l\in \mathcal{I}} k_l \overline{Q_i (l)}\overline{Q_j (l)}Q_k (l).
\end{equation}
By $q_{i,j_0}^j=\delta_{ij}$, where $\delta_{ij}$ is the Kronecker delta,
we have
\begin{equation}
    \label{eq:Krein3}
\sum_{l\in \mathcal{I}} k_l \overline{Q_i (l)}Q_j (l)=|X|m_i \delta_{ij}.
\end{equation}
Note that \eqref{eq:Krein3} is known as the orthogonality relation of the second eigenmatrix $Q$.

A symmetric association scheme $\mathfrak{X}=(X,\{R_i\}_{i\in \mathcal{I}})$ of class $d$
is called \emph{$P$-polynomial}
if it satisfies the following conditions:
$\mathcal{I}=\{0,1,\ldots ,d\}$
and there exists a univariate polynomial $v_i$ of degree $i$ such that $A_i=v_i(A_1)$
for each $i\in \{0,1,\ldots ,d\}$. 
Similarly, a symmetric association scheme $\mathfrak{X}=(X,\{R_i\}_{i\in \mathcal{I}})$ of class $d$
is called \emph{$Q$-polynomial}
if it satisfies the following conditions:
$\mathcal{J}=\{0,1,\ldots ,d\}$
and there exists a univariate polynomial $v^\ast_j$ of degree $j$ such that $|X|E_j=v^\ast _j(|X| E_1)$
(under the Hadamard product)
for each $j\in \{0,1,\ldots ,d\}$. 
The following condition is well known as an equivalent condition of the property of $P$-polynomial: for $i\in \{0,1,\ldots,d\}$, the three-term recurrence formula 
\[
    A_1 A_i = p^{i-1}_{1i} A_{i-1} +p^{i}_{1i} A_i +p^{i+1}_{1i} A_{i+1}
\]
holds.
Note that $p^{-1}_{10}A_{-1}$ and $p^{d+1}_{1d}A_{d+1}$ are regarded as zero.
Similarly, an equivalent condition of the property of $Q$-polynomial
is the following: for $i\in \{0,1,\ldots ,d\}$, the three-term recurrence formula 
\[
    (|X| E_1) \circ (|X| E_i) = q^{i-1}_{1i}|X| E_{i-1} +q^{i}_{1i} |X| E_i +q^{i+1}_{1i} |X| E_{i+1}
\]
holds. 
Note that $q^{-1}_{10}|X|E_{-1}$ and $q^{d+1}_{1d}|X|E_{d+1}$ are regarded as zero.

\subsection{Multivariate polynomial association schemes}
\label{sec:Def}

To introduce the definition of multivariate polynomial association schemes,
we first explain the fundamentals of monomial orders.
For further details, please refer to Cox-Little-O'Shea~\cite{Cox}.
In this paper, we will use the following notation:
\[\N^\ell:=\{(n_1,n_2,\ldots ,n_\ell) \mid \text{$n_i$ are nonnegative integers}\}.\] 
Here, $o := (0,0,\ldots ,0)\in \N^\ell$ represents the $\ell$-tuple of zeros. 
For $i=1,2,\ldots ,\ell$, $\epsilon_i\in \N^\ell$ denotes the $\ell$-tuple in which the $i$-th entry is $1$ and the remaining entries are $0$. 
For any $\alpha=(n_1,n_2, \ldots ,n_\ell), \beta=(m_1,m_2, \ldots ,m_\ell)\in \N^\ell$, we define $\alpha+\beta$ as $(n_1+m_1, n_2+m_2, \ldots , n_\ell +m_\ell)$, 
and when $n_i\ge m_i$ ($i=1,2,\ldots , \ell$), we define $\alpha-\beta$ as $(n_1-m_1, n_2-m_2, \ldots , n_\ell -m_\ell)$. 
For $\alpha=(n_1,n_2, \ldots ,n_\ell)\in \N^\ell$, we denote $\sum^\ell_{i=1}n_i$ by $|\alpha|$.

\begin{dfe}
    \label{df:monomialorder}
    A \emph{monomial order} $\le$ on $\N^\ell$
    is a relation on the set of $\N^\ell$ satisfying:
    \begin{enumerate}[label=$(\roman*)$]
        \item $\le$ is a total order;
        \item for $\alpha,\beta,\gamma\in \N^\ell$, if $\alpha\le \beta$, then $\alpha + \gamma \le \beta + \gamma$;
        \item $\le$ is a well-ordering, i.e., any nonempty subset of $\N^\ell$
        has a minimum element under $\le$.
    \end{enumerate}
\end{dfe}

Here we give two typical examples of monomial orders.
Let $\alpha=(n_1,n_2, \ldots ,n_\ell), \beta=(m_1,m_2, \ldots ,m_\ell)\in \N^\ell$. 
We define $\alpha \le_{\mathrm{lex}} \beta$ if the leftmost nonzero entry of $\alpha-\beta\in \Z^\ell$ is negative. 
This  $\le_{\mathrm{lex}}$ becomes a monomial order, and is called the \emph{lexicographic} (or \emph{lex}) order. 
Furthermore, we define $\alpha \le_{\mathrm{grlex}} \beta$ if $|\alpha|<|\beta|$ or both $|\alpha|=|\beta|$ and $\alpha \le_{\mathrm{lex}} \beta$ hold. 
This $\le_{\mathrm{grlex}}$ becomes also a monomial order, and
is known as the \emph{graded lexicographic} (or \emph{grlex}) order.

For $\alpha=(n_1,n_2,\ldots , n_\ell)\in \N^\ell$
and $\bm{x}=(x_1,x_2,\ldots , x_\ell)$,
we write the monomial $x_1^{n_1}x_2^{n_2}\cdots x_\ell^{n_\ell}$
by $\bm{x}^\alpha$.
Then $\alpha$ is called the \emph{multidegree} of $\bm{x}^\alpha$.

\begin{dfe}[\cite{BKZZ}]
    \label{df:abPpoly}
    Let $\mathcal{D}\subset \N^\ell$
    having $\epsilon_1,\epsilon_2,\ldots ,\epsilon_\ell$
    and $\le$ be a monomial order on $\N^\ell$.
    A commutative association scheme $\mathfrak{X}=(X,\mathcal{R})$ is called \emph{$\ell$-variate $P$-polynomial}
    on the domain $\mathcal{D}$ with respect to $\le$
    if the following three conditions are satisfied:
    \begin{enumerate}[label=$(\roman*)$]
        \item if $(n_1,n_2,\ldots ,n_\ell)\in \mathcal{D}$
        and $0\le m_i \leq n_i$ for $i=1,2,\ldots ,\ell$,
        then $(m_1,m_2,\ldots ,m_\ell)\in \mathcal{D}$;
        \item there exists a relabeling of the adjacency matrices of $\mathfrak{X}$:
        \[
        \{A_i\}_{i\in \mathcal{I}} = \{A_\alpha \}_{\alpha\in \mathcal{D}},   
        \]
        such that, for $\alpha\in \mathcal{D}$,
        \begin{equation}
            \label{eq:A=vij}
            A_\alpha=v_\alpha(A_{\epsilon_1},A_{\epsilon_2},\ldots ,A_{\epsilon_\ell}),    
        \end{equation}
        where $v_\alpha(\bm{x})$ is an $\ell$-variate polynomial
        of multidegree $\alpha$ with respect to $\le$
        and all monomials $\bm{x}^\beta$ in $v_\alpha(\bm{x})$ satisfy $\beta \in \mathcal{D}$;
        \item for $i=1,2,\ldots ,\ell$ and $\alpha=(n_1,n_2,\ldots,n_\ell)\in \mathcal{D}$,
        the product
        $A_{\epsilon_i}\cdot A_{\epsilon_1}^{n_1}A_{\epsilon_2}^{n_2}\cdots A_{\epsilon_\ell}^{n_\ell}$
        is a linear combination of
        \[
         \{A_{\epsilon_1}^{m_1}A_{\epsilon_2}^{m_2}\cdots A_{\epsilon_\ell}^{m_\ell} \mid \beta =(m_1,m_2,\ldots ,m_\ell)\in \mathcal{D},\  \beta \le  \alpha +\epsilon_i\}.   
        \]
    \end{enumerate}

\end{dfe}

Hereafter, we use notation $\bm{A}$ to denote $(A_{\epsilon_1},A_{\epsilon_2},\ldots ,A_{\epsilon_\ell})$.
Also, for $\alpha=(n_1,n_2,\ldots , n_\ell)\in \N^\ell$,
we write the monomial $A_{\epsilon_1}^{n_1} A_{\epsilon_2}^{n_2} \cdots A_{\epsilon_\ell}^{n_\ell}$
by $\bm{A}^\alpha$.

\begin{prop}[cf.~Proposition~2.6 of \cite{bi}]
\label{prop:P-TFAE}
Let $\mathcal{D}\subset \N^\ell$ having $\epsilon_1,\epsilon_2,\ldots ,\epsilon_\ell$ and
$\mathfrak{X} = (X,\{A_\alpha\}_{\alpha\in \mathcal{D}})$
be a commutative association scheme.
Then the statements (i) and (ii) are equivalent:
\begin{enumerate}[label=$(\roman*)$]
\item $\mathfrak{X}$ is
an $\ell$-variate $P$-polynomial association scheme 
on $\mathcal{D}$ with respect to a monomial order $\le$;
\item
the condition (i) of Definition~\ref{df:abPpoly} holds for $\mathcal{D}$
and the intersection numbers satisfy,
for each $i=1,2,\ldots ,\ell$ and each $\alpha \in \mathcal{D}$,
$p^\beta_{\epsilon_i, \alpha} \neq 0$ for $\beta\in \mathcal{D}$ implies $\beta \le \alpha+\epsilon_i$. 
Moreover, if $\alpha+\epsilon_i\in \mathcal{D}$, then $p^{\alpha+\epsilon_i}_{\epsilon_i, \alpha} \neq 0$ holds.
\end{enumerate}
\end{prop}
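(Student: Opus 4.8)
The plan is to pass back and forth between the two spanning sets $\{A_\alpha\}_{\alpha\in\mathcal D}$ and $\{\bm{A}^\alpha\}_{\alpha\in\mathcal D}$ of the Bose-Mesner algebra $\mathfrak A$, and to show in each direction that the relevant transition matrix is triangular with respect to $\le$ with non-zero diagonal entries. Throughout I would use the standard fact that $o$ is the $\le$-minimum of $\N^\ell$ (a hypothetical $\alpha<o$ would give the infinite descending chain $\alpha>2\alpha>3\alpha>\cdots$, contradicting the well-ordering), so that $\gamma-\epsilon_i<\gamma$ whenever the $i$-th coordinate of $\gamma$ is positive; this justifies inductions along $\le$, with base case $\bm{A}^{o}=A_{o}=I_X$.

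For the implication (i) $\Rightarrow$ (ii), condition (i) of Definition~\ref{df:abPpoly} for $\mathcal D$ is inherited directly. Write $A_\alpha=v_\alpha(\bm{A})=\sum_{\beta\le\alpha,\ \beta\in\mathcal D}c_{\alpha,\beta}\bm{A}^\beta$; that $v_\alpha$ has multidegree $\alpha$ with all monomials from $\mathcal D$ is precisely the statement that $c_{\alpha,\alpha}\neq0$ and that the sum runs over such $\beta$. Hence $(c_{\alpha,\beta})_{\alpha,\beta\in\mathcal D}$ is lower triangular for $\le$ with non-zero diagonal, so it is invertible and $\{\bm{A}^\beta\}_{\beta\in\mathcal D}$ is a basis of $\mathfrak A$. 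For $\alpha\in\mathcal D$ and $i\in\{1,\dots,\ell\}$ I would expand $A_{\epsilon_i}A_\alpha=\sum_{\beta\le\alpha}c_{\alpha,\beta}\bm{A}^{\beta+\epsilon_i}$; since every $\beta$ occurring lies in $\mathcal D$, condition (iii) of Definition~\ref{df:abPpoly} applies to each $\bm{A}^{\beta+\epsilon_i}=A_{\epsilon_i}\bm{A}^\beta$ and gives $A_{\epsilon_i}A_\alpha\in\Span\{\bm{A}^\delta:\delta\in\mathcal D,\ \delta\le\alpha+\epsilon_i\}$. Converting back through the inverse triangular matrix yields $p^\beta_{\epsilon_i,\alpha}\neq0\Rightarrow\beta\le\alpha+\epsilon_i$. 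For the ``moreover'' clause, assuming $\alpha+\epsilon_i\in\mathcal D$, I would compare on both sides of $A_{\epsilon_i}A_\alpha=\sum_\eta p^\eta_{\epsilon_i,\alpha}A_\eta$ the coefficient of the basis vector $\bm{A}^{\alpha+\epsilon_i}$: the left side contributes $c_{\alpha,\alpha}$ (the terms $\bm{A}^{\beta+\epsilon_i}$ with $\beta<\alpha$ lie, by condition (iii) again, in the span of monomials $\bm{A}^\delta$ with $\delta<\alpha+\epsilon_i$), while the right side contributes $p^{\alpha+\epsilon_i}_{\epsilon_i,\alpha}\,c_{\alpha+\epsilon_i,\alpha+\epsilon_i}$; since both leading coefficients are non-zero, so is $p^{\alpha+\epsilon_i}_{\epsilon_i,\alpha}$.

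For the converse (ii) $\Rightarrow$ (i), condition (i) of Definition~\ref{df:abPpoly} is assumed. The first step is the lemma that for \emph{every} $\gamma\in\N^\ell$ one has $\bm{A}^\gamma=\sum_{\beta\le\gamma,\ \beta\in\mathcal D}r_{\gamma,\beta}A_\beta$, proved by induction on $\gamma$: write $\gamma=\gamma'+\epsilon_i$, use $\bm{A}^\gamma=A_{\epsilon_i}\bm{A}^{\gamma'}$ together with $\bm{A}^{\gamma'}=\sum_{\beta\le\gamma',\ \beta\in\mathcal D}r_{\gamma',\beta}A_\beta$, and invoke the hypothesis $p^\delta_{\epsilon_i,\beta}\neq0\Rightarrow\delta\le\beta+\epsilon_i\le\gamma'+\epsilon_i=\gamma$. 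The second step is to show $r_{\gamma,\gamma}\neq0$ for $\gamma\in\mathcal D$, again by induction on $\gamma$: here $\gamma'=\gamma-\epsilon_i\in\mathcal D$ by the down-set hypothesis, and isolating the coefficient of $A_\gamma$ in $A_{\epsilon_i}\bm{A}^{\gamma'}$ one finds that only $\beta=\gamma'$ contributes, since $p^\gamma_{\epsilon_i,\beta}\neq0$ together with $\beta\le\gamma'$ forces $\gamma\le\beta+\epsilon_i\le\gamma$, i.e.\ $\beta+\epsilon_i=\gamma$; thus $r_{\gamma,\gamma}=r_{\gamma',\gamma'}\,p^{\gamma}_{\epsilon_i,\gamma'}$, non-zero by the inductive hypothesis and the ``moreover'' clause applied to $\gamma'+\epsilon_i=\gamma\in\mathcal D$. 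Consequently $(r_{\gamma,\beta})_{\gamma,\beta\in\mathcal D}$ is lower triangular with non-zero diagonal, hence invertible, so $\{\bm{A}^\beta\}_{\beta\in\mathcal D}$ is a basis of $\mathfrak A$; inverting gives $A_\alpha=\sum_{\beta\le\alpha,\ \beta\in\mathcal D}s_{\alpha,\beta}\bm{A}^\beta$ with $s_{\alpha,\alpha}\neq0$, which serves as the polynomial $v_\alpha$ in Definition~\ref{df:abPpoly}(ii). Finally condition (iii) of Definition~\ref{df:abPpoly} follows from the lemma applied to $\gamma=\alpha+\epsilon_i$, after replacing each $A_\beta$ occurring there by $v_\beta(\bm{A})$, whose monomials have multidegree $\le\beta\le\alpha+\epsilon_i$ and lie in $\mathcal D$.

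The step I expect to be the main obstacle is the bookkeeping around monomials $\bm{A}^\gamma$ with $\gamma\notin\mathcal D$: these are not among the distinguished spanning vectors and must be rewritten, and the whole argument hinges on first knowing that $\{\bm{A}^\beta\}_{\beta\in\mathcal D}$ is a basis of $\mathfrak A$ --- which is exactly where the down-set hypothesis on $\mathcal D$ and the non-vanishing clause $p^{\alpha+\epsilon_i}_{\epsilon_i,\alpha}\neq0$ enter, through the triangularity-with-non-zero-diagonal of the transition matrix. Everything else is a matching of leading coefficients with respect to $\le$, in the spirit of the bivariate Proposition~2.6 of~\cite{bi}.
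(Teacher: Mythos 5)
Your argument is correct: both directions reduce to the observation that the transition matrix between $\{A_\alpha\}_{\alpha\in\mathcal D}$ and $\{\bm{A}^\beta\}_{\beta\in\mathcal D}$ is triangular with nonzero diagonal with respect to $\le$, with the well-ordering of the monomial order justifying the inductions and the ``moreover'' clause supplying the nonvanishing of the diagonal entries in the converse direction. The paper itself does not spell out a proof (it defers to Proposition~2.6 of \cite{bi} and to \cite{BKZZ}), but your argument is essentially the standard one used there, so nothing further is needed.
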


The multivariate $Q$-polynomial association scheme is defined by replacing the adjacency matrices with the primitive idempotents in the definition of the multivariate $P$-polynomial association scheme as Definition~\ref{df:abQpoly}. 
An equivalent condition for the multivariate $Q$-polynomial association scheme is also given by Proposition~\ref{prop:Q-TFAE} in the same way as Proposition~\ref{prop:P-TFAE}.

\begin{dfe}[\cite{BKZZ}]
    \label{df:abQpoly}
    Let $\mathcal{D}^\ast \subset \N^\ell$
    having $\epsilon_1,\epsilon_2,\ldots ,\epsilon_\ell$
    and $\le$ be a monomial order on $\N^\ell$.
    A commutative association scheme $\mathfrak{X}=(X,\mathcal{R})$
    with the primitive idempotents $\{E_j\}_{j\in \mathcal{J}}$ is called \emph{$\ell$-variate $Q$-polynomial}
    on the domain $\mathcal{D}^\ast$ with respect to $\le$
    if the following three conditions are satisfied:
    \begin{enumerate}[label=$(\roman*)$]
        \item if $(n_1,n_2,\ldots ,n_\ell)\in \mathcal{D}^\ast$
        and $0\le m_i \leq n_i$ for $i=1,2,\ldots ,\ell$,
        then $(m_1,m_2,\ldots ,m_\ell)\in \mathcal{D}^\ast$;
        \item there exists a relabeling of the adjacency matrices:
        \[
        \{E_j\}_{j\in \mathcal{J}} = \{E_\alpha \}_{\alpha\in \mathcal{D}^\ast}, 
        \]
        such that, for $\alpha\in \mathcal{D}^\ast$,
        \[
            |X| E_\alpha=v^\ast_\alpha(|X| E_{\epsilon_1},|X| E_{\epsilon_2},\ldots ,|X| E_{\epsilon_\ell})
            \ \text{(under the Hadamard product),}
        \]
        where $v^\ast_\alpha(\bm{x})$ is an $\ell$-variate polynomial
        of multidegree $\alpha$ with respect to $\le$
        and all monomials $\bm{x}^\beta$ in $v^\ast_\alpha(\bm{x})$ satisfy $\beta \in \mathcal{D}^\ast$;
        \item for $i=1,2,\ldots ,\ell$ and $\alpha=(n_1,n_2,\ldots,n_\ell)\in \mathcal{D}^\ast$,
        the product
        $E_{\epsilon_i}\circ E_{\epsilon_1}^{\circ n_1}\circ E_{\epsilon_2}^{\circ n_2}\circ \cdots  \circ E_{\epsilon_\ell}^{\circ n_\ell}$
        is a linear combination of
        \[
            \{E_{\epsilon_1}^{\circ m_1}\circ E_{\epsilon_2}^{\circ m_2}\circ \cdots  \circ E_{\epsilon_\ell}^{\circ m_\ell} \mid \beta=(m_1,m_2,\ldots ,m_\ell)\in \mathcal{D}^\ast,\  \beta\le  \alpha+\epsilon_i\}.   
        \]
    \end{enumerate}

\end{dfe}

\begin{prop}
    \label{prop:Q-TFAE}
    Let $\mathcal{D}^\ast\subset \N^\ell$  having $\epsilon_1,\epsilon_2,\ldots ,\epsilon_\ell$ and
    $\mathfrak{X}$
    be a commutative association scheme
    with the primitive idempotents $\{E_\alpha\}_{\alpha\in \mathcal{D}^\ast}$
    indexed by $\mathcal{D}^\ast$.
    The statements (i) and (ii) are equivalent:
    \begin{enumerate}[label=$(\roman*)$]
        \item $\mathfrak{X}$ is
        an $\ell$-variate $Q$-polynomial association scheme 
        on $\mathcal{D}^\ast$ with respect to $\le$;
        \item
        the condition (i) of Definition~\ref{df:abQpoly} holds for $\mathcal{D}^\ast$
        and the Krein numbers satisfy,
        for each $i=1,2,\ldots ,\ell$ and each $\alpha \in \mathcal{D}^\ast$,
        $q^\beta_{\epsilon_i, \alpha} \neq 0$ for $\beta\in \mathcal{D}^\ast$ implies $\beta \le \alpha+\epsilon_i$. 
        Moreover, if $\alpha+\epsilon_i\in \mathcal{D}^\ast$, then $q^{\alpha+\epsilon_i}_{\epsilon_i, \alpha} \neq 0$ holds.
    \end{enumerate}
\end{prop}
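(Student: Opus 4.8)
The plan is to prove Proposition~\ref{prop:Q-TFAE} as the $Q$-analogue of Proposition~\ref{prop:P-TFAE}, carrying the argument of the latter (which itself follows Proposition~2.6 of~\cite{bi}) through the formal substitution $A_\alpha\mapsto |X|E_\alpha$, ordinary matrix product $\mapsto$ Hadamard product $\circ$, and $p^\gamma_{\alpha\beta}\mapsto q^\gamma_{\alpha\beta}$. Under this substitution the unit $A_o=I_X$ is replaced by $|X|E_o=|X|E_{j_0}=J_X$, the unit of $(\mathfrak A,\circ)$, and Definition~\ref{df:abPpoly} turns into Definition~\ref{df:abQpoly}. Because there is no general duality of association schemes, one cannot literally quote Proposition~\ref{prop:P-TFAE}; instead the same argument has to be rerun in the dual language, which is legitimate since all structural ingredients have dual counterparts here: $\mathfrak A$ is a commutative algebra under $\circ$ with unit $J_X$; $\{|X|E_\alpha\}_{\alpha\in\mathcal D^\ast}$ is a basis with structure constants $q^\gamma_{\alpha\beta}$; $\le$ is a translation-invariant well-ordering; and $\mathcal D^\ast$ contains $o,\epsilon_1,\dots,\epsilon_\ell$ and is closed under passing to componentwise-smaller tuples. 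Throughout I write $(|X|\bm E)^{\circ\alpha}$ for the Hadamard monomial $(|X|E_{\epsilon_1})^{\circ n_1}\circ\cdots\circ(|X|E_{\epsilon_\ell})^{\circ n_\ell}$ with $\alpha=(n_1,\dots,n_\ell)$.

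For the direction (i)$\Rightarrow$(ii), condition (i) of Definition~\ref{df:abQpoly} is part of the hypothesis, and I would deduce the Krein-number condition as follows. Writing $|X|E_\alpha=v^\ast_\alpha(|X|\bm E)$ with $v^\ast_\alpha$ of multidegree $\alpha$ (condition (ii) of Definition~\ref{df:abQpoly}) expresses the basis $\{|X|E_\alpha\}_{\alpha\in\mathcal D^\ast}$ in terms of the Hadamard monomials $\{(|X|\bm E)^{\circ\beta}\}_{\beta\in\mathcal D^\ast}$ by a matrix that is triangular with respect to $\le$ with nonzero diagonal (the leading coefficients of the $v^\ast_\alpha$); hence this matrix is invertible with triangular inverse, so the Hadamard monomials form a basis as well and $(|X|\bm E)^{\circ\beta}=\sum_{\mu\le\beta}r_{\beta\mu}\,|X|E_\mu$ with $r_{\beta\beta}\ne0$. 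Expanding $|X|E_\alpha$ into Hadamard monomials, rewriting each $(|X|E_{\epsilon_i})\circ(|X|\bm E)^{\circ\gamma}=(|X|\bm E)^{\circ(\gamma+\epsilon_i)}$ by condition (iii) of Definition~\ref{df:abQpoly}, and re-expanding in the $\{|X|E_\bullet\}$ basis then shows that $(|X|E_{\epsilon_i})\circ(|X|E_\alpha)$ involves only $|X|E_\beta$ with $\beta\le\alpha+\epsilon_i$; comparing this with $(|X|E_{\epsilon_i})\circ(|X|E_\alpha)=\sum_\beta q^\beta_{\epsilon_i,\alpha}\,|X|E_\beta$ and using uniqueness of the basis expansion yields $q^\beta_{\epsilon_i,\alpha}\ne0\Rightarrow\beta\le\alpha+\epsilon_i$. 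For the nonvanishing claim, when $\alpha+\epsilon_i\in\mathcal D^\ast$ I would compare, on the two sides of that same identity, the coefficient of the top Hadamard monomial $(|X|\bm E)^{\circ(\alpha+\epsilon_i)}$: on one side it equals the leading coefficient of $v^\ast_\alpha$, on the other it equals $q^{\alpha+\epsilon_i}_{\epsilon_i,\alpha}$ times the leading coefficient of $v^\ast_{\alpha+\epsilon_i}$, and both leading coefficients are nonzero, so $q^{\alpha+\epsilon_i}_{\epsilon_i,\alpha}\ne0$.

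For (ii)$\Rightarrow$(i), I read the hypothesis as the recurrence
\[
(|X|E_{\epsilon_i})\circ(|X|E_\alpha)=q^{\alpha+\epsilon_i}_{\epsilon_i,\alpha}\,|X|E_{\alpha+\epsilon_i}+\sum_{\beta\in\mathcal D^\ast,\ \beta<\alpha+\epsilon_i}q^\beta_{\epsilon_i,\alpha}\,|X|E_\beta,
\]
where the first term is present with nonzero coefficient precisely when $\alpha+\epsilon_i\in\mathcal D^\ast$, and I would run a simultaneous induction over $\delta\in\N^\ell$ along the well-ordering $\le$ proving two statements: (a) if $\delta\in\mathcal D^\ast$, then $|X|E_\delta=v^\ast_\delta(|X|\bm E)$ for some polynomial $v^\ast_\delta$ of multidegree $\delta$ all of whose monomials lie in $\mathcal D^\ast$; and (b) $(|X|\bm E)^{\circ\delta}\in\Span_\C\{\,|X|E_\beta : \beta\in\mathcal D^\ast,\ \beta\le\delta\,\}$. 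Statement (b) is immediate for $\delta\in\mathcal D^\ast$ and, for $\delta=\delta'+\epsilon_i\notin\mathcal D^\ast$, follows by applying the inductive (b) to $(|X|\bm E)^{\circ\delta'}$ and then the recurrence to each resulting $(|X|E_{\epsilon_i})\circ(|X|E_{\beta'})$. For (a) with $\delta=\gamma+\epsilon_i\ne o$ and $\gamma:=\delta-\epsilon_i\in\mathcal D^\ast$, solving the recurrence for $|X|E_\delta$ and substituting the inductive polynomial forms of $|X|E_\gamma$ and of the lower $|X|E_\beta$ gives $|X|E_\delta=\hat v_\delta(|X|\bm E)$ with $\hat v_\delta=\frac1{q^\delta_{\epsilon_i,\gamma}}\bigl(x_i v^\ast_\gamma-\sum_{\beta<\delta}q^\beta_{\epsilon_i,\gamma}v^\ast_\beta\bigr)$ of multidegree $\delta$; any monomial $\bm x^\zeta$ of $\hat v_\delta$ with $\zeta\notin\mathcal D^\ast$ satisfies $\zeta<\delta$, so by the inductive (b) together with (a) I may replace $(|X|\bm E)^{\circ\zeta}$ by a polynomial in $|X|\bm E$ supported on $\mathcal D^\ast$ of multidegree $<\delta$, producing the required $v^\ast_\delta$. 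Finally, condition (iii) of Definition~\ref{df:abQpoly} drops out of (b) applied to $\alpha+\epsilon_i$ combined with (a), since $(|X|\bm E)^{\circ(\alpha+\epsilon_i)}=(|X|E_{\epsilon_i})\circ(|X|\bm E)^{\circ\alpha}$ then lies in the span of $\{v^\ast_\beta(|X|\bm E):\beta\le\alpha+\epsilon_i\}$, hence of $\{(|X|\bm E)^{\circ\mu}:\mu\in\mathcal D^\ast,\ \mu\le\alpha+\epsilon_i\}$. I expect the only genuinely delicate point, in both directions, to be exactly this domain bookkeeping: because $\le$ need not refine the componentwise order, the recursion can output monomials of multidegree $\le\delta$ that leave $\mathcal D^\ast$, and the auxiliary statement (b) — whose proof is where downward-closedness of $\mathcal D^\ast$, translation-invariance of $\le$, and the well-ordering property get used together — is precisely what is needed to reabsorb them. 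Everything else is a routine transcription of the proof of Proposition~\ref{prop:P-TFAE}.
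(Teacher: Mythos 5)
Your proposal is correct. Note that the paper itself gives no proof of Proposition~\ref{prop:Q-TFAE}: it is stated as the formal Hadamard-product analogue of Proposition~\ref{prop:P-TFAE} and deferred to \cite{BKZZ} (and ultimately to Proposition~2.6 of \cite{bi}), so there is nothing in the text to compare against line by line. Your writeup supplies exactly the argument that is being gestured at, and it handles correctly the two points where such a transcription could actually go wrong: (1) in (i)$\Rightarrow$(ii), the extraction of $q^{\alpha+\epsilon_i}_{\epsilon_i,\alpha}\neq 0$ by comparing the coefficient of the top Hadamard monomial $(|X|\bm E)^{\circ(\alpha+\epsilon_i)}$ on both sides, which works because $E_{\epsilon_i}$ is itself the degree-$\epsilon_i$ monomial and because the already-established bound $q^\beta_{\epsilon_i,\alpha}\neq 0\Rightarrow\beta\le\alpha+\epsilon_i$ isolates the single contribution from $\beta=\alpha+\epsilon_i$; and (2) in (ii)$\Rightarrow$(i), the auxiliary statement (b) that reabsorbs monomials whose multidegree leaves $\mathcal D^\ast$, which is needed precisely because $\le$ need not refine the componentwise order. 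Two cosmetic remarks only: the assertion that (b) is ``immediate'' for $\delta\in\mathcal D^\ast$ is really a consequence of (a) at $\delta$ together with the inductive hypotheses (or of the same recurrence argument you use in the $\delta\notin\mathcal D^\ast$ case, which applies verbatim), so it deserves a half-sentence rather than the word ``immediate''; and the base case $\delta=o$, where $|X|E_o=J_X$ is the Hadamard unit, should be stated explicitly to anchor the induction along the well-ordering.
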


\section{Nonbinary Johnson schemes}
\label{sec:nonbinary}

In this section, we describe the nonbinary Johnson scheme,
which is a generalization of the Hamming schemes and the Johnson schemes,
and in this paper, we require various properties of these schemes. 
Therefore, 
in Subsection~\ref{sec:Hamming_Johnson}, 
we first describe the properties of the Hamming schemes and the Johnson schemes.
In Subsection~\ref{sec:nonbinaryAS}, we describe the nonbinary Johnson schemes.

\subsection{Hamming schemes and Johnson schemes}
\label{sec:Hamming_Johnson}
Let $H(n,q)$ be the \emph{Hamming scheme}, i.e., the set of all $q$-ary $n$-tuples with the adjacency relation $R_i$ defined by the Hamming distance $i$.
Then the class of $H(n,q)$ is $n$.
For details of the Hamming scheme, see \cite{BBIT2021,BI1984,BCN1989}.
Let $k^{H(n,q)}_i$ and $m^{H(n,q)}_i$ be the valency and the multiplicity of $H(n,q)$, respectively.
These values are known as
\begin{equation}
    \label{eq:Hamming_k_m}
    k^{H(n,q)}_i=m^{H(n,q)}_i=(q-1)^i\binom{n}{i}.
\end{equation}
Also, let $p^k_{ij}(H(n,q))$ and $q^k_{ij}(H(n,q))$
be the intersection number and the Krein number of $H(n,q)$, respectively.
It is well known that Hamming schemes are $P$- and $Q$-polynomial association schemes.
We give a formula for these numbers for later use:
\begin{equation}
    \label{eq:Hamming_p_q}
    p^\alphaalt_{1i}(H(k-y,r-1))=q^\alphaalt_{1i}(H(k-y,r-1))=
    \begin{cases}
        (k-y-i+1)(r-2) & \text{if $\alphaalt=i-1$;}\\
        i(r-3) & \text{if $\alphaalt=i$;}\\
        i+1 & \text{if $\alphaalt=i+1$;}\\
        0 & \text{otherwise.}
    \end{cases}
\end{equation}
Moreover, let $P^{H(n,q)}_i(j)$ and $Q^{H(n,q)}_i(j)$ be the $(i,j)$-entry of the first and the second eigenmatrix of $H(n,q)$, respectively.
Then we have
\[
    P^{H(n,q)}_i(j)=Q^{H(n,q)}_i(j)=K_i(n,q;j),
\]
where $K_i(n,q;j)$ is the \emph{Krawtchouk polynomial} defined by
\[
    K_i(n, q;j) = \sum_{u = 0}^i (-1)^{u} (q-1)^{i-u} \binom{j}{u} \binom{n-j}{i-u}.
\]

Let $J(n,k)$ be the \emph{Johnson scheme}, i.e., the set of all $k$-subsets of $\{1,2,\ldots,n\}$ with the adjacency relation $R_i$ defined by the symmetric difference of cardinality $i$.
Then the class of $J(n,k)$ is $k\wedge (n-k)$, where $k\wedge (n-k)$ is the minimum of $k$ and $n-k$.
For details of the Johnson scheme, see \cite{BBIT2021,BI1984,BCN1989}.
Let $k^{J(n,k)}_i$ and $m^{J(n,k)}_i$ be the valency and the multiplicity of $J(n,k)$, respectively.
These values are known as
\begin{align}
    k^{J(n,k)}_i&=\binom{k}{i}\binom{n-k}{i},\label{eq:Johnson_k}\\
    m^{J(n,k)}_i&=\binom{n}{i}-\binom{n}{i-1}=\frac{n-2i+1}{n-i+1}\binom{n}{i}.\label{eq:Johnson_m}
\end{align}
It is well known that Johnson schemes are $P$- and $Q$-polynomial association schemes.
Let $q^\gamma_{\alphaalt\betaalt}(J(n,k))$ be the Krein number of the Johnson scheme $J(n,k)$.
We give a formula for these numbers for later use:
\begin{equation}
    \label{eq:Johnson_q}
    q^\alphaalt_{1j}(J(n-i,k-i))=
    \begin{cases}
        \frac{(n-i)(n-i-1)(k-i-j+1)(n-i-j+2)(n-k-j+1)}{(k-i)(n-k)(n-i-2j+2)(n-i-2j+3)}
 & \text{if $\alphaalt=j-1$;}\\
        a^\ast_{j} & \text{if $\alphaalt=j$;}\\
        \frac{(n-i)(n-i-1)(j+1)(k-i-j)(n-k-j)}{(k-i)(n-k)(n-i-2j)(n-i-2j-1)} & \text{if $\alphaalt=j+1$;}\\
        0 & \text{otherwise,}
    \end{cases}
\end{equation}
where
\[
a^\ast_{j}=
\textstyle{
(n-i-1)
-\frac{(n-i)(n-i-1)}{(k-i)(n-k)(n-i-2j+1)}\left(
\frac{(k-i-j)(n-i-j+1)(n-k-j)}{(n-i-2j)}
+\frac{j(k-i-j+1)(n-k-j+1)}{(n-i-2j+2)}
\right).}    
\]
Moreover, let $P^{J(n,k)}_i(j)$ and $Q^{J(n,k)}_i(j)$ be the $(i,j)$-entry of the first and the second eigenmatrix of $J(n,k)$, respectively.
Then we have
\[
    P^{J(n,k)}_i(j)=E_i(n,k;j)
    ,\quad Q^{J(n,k)}_i(j)=H_i(n,k;j),
\]
where $E_i(n,k;j)$ and $H_i(n,k;j)$ are the \emph{Eberlein polynomial} (\emph{dual Hahn}) and the \emph{Hahn polynomial} defined by
\begin{align*}
    E_i(n, k;j) &= \sum_{u = 0}^i (-1)^{u} \binom{j}{u} \binom{k-j}{i-u} \binom{n-k-j}{i-u}, \\
    H_i(n, k;j) &= \frac{\binom{n}{i} - \binom{n}{i-1}}{\binom{k}{j} \binom{n-k}{j}} E_j(n, k;i). 
\end{align*}

For the Hahn polynomials, we have the following recurrence relation.
This relation is due to \cite{biQ}.
\begin{lem}
\label{lem:Hahn} Let $p,N$ be integers such that $0<p<N$.
For $0\le x \le (p-1)\wedge (N-p)$
and $0\le r \le p\wedge (N-p)$, we have
\begin{equation}
\label{eq:Crampe}
H_r(N,p;x)=\tfrac{N}{p} 
\left( \tfrac{p-r}{N-2r}H_r(N-1,p-1;x)+\tfrac{N-p-r+1}{N-2r+2}
H_{r-1}(N-1,p-1;x) \right).
\end{equation}
\end{lem}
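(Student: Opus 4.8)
The plan is to peel off the normalisation, reduce \eqref{eq:Crampe} to an identity purely among the Eberlein polynomials, and then prove that identity by a short generating-function computation. First I would substitute $H_r(N,p;x)=\frac{\binom{N}{r}-\binom{N}{r-1}}{\binom{p}{x}\binom{N-p}{x}}E_x(N,p;r)$ into all three Hahn polynomials appearing in \eqref{eq:Crampe}, using $(N-1)-(p-1)=N-p$ so that the two terms on the right acquire the common denominator $\binom{p-1}{x}\binom{N-p}{x}$. Clearing denominators and repeatedly applying $\binom{p}{x}=\frac{p}{p-x}\binom{p-1}{x}$, $\binom{M}{r}-\binom{M}{r-1}=\frac{M-2r+1}{M-r+1}\binom{M}{r}$ (at $M=N$, at $M=N-1$, and with $r$ replaced by $r-1$), and $\binom{N}{r}=\frac{N}{N-r}\binom{N-1}{r}=\frac{N}{r}\binom{N-1}{r-1}$, one checks that every extraneous factor --- including the prefactor $\frac{N}{p}$ and the denominators $N-2r$ and $N-2r+2$ of \eqref{eq:Crampe} --- cancels, and \eqref{eq:Crampe} becomes equivalent to
\[
(p-x)(N-2r+1)\,E_x(N,p;r)=(p-r)(N-r+1)\,E_x(N-1,p-1;r)+r(N-p-r+1)\,E_x(N-1,p-1;r-1).
\]
(Sanity check: at $x=0$ and at $r=0$ this reduces, up to a common binomial factor, to $p(N-2r+1)=(p-r)(N-r+1)+r(N-p-r+1)$.)

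To prove the displayed Eberlein identity I would pass to generating functions. Separating off the $\binom{r}{u}$-factor in $E_x(N,p;r)=\sum_u(-1)^u\binom{r}{u}\binom{p-r}{x-u}\binom{N-p-r}{x-u}$ and using $\sum_u(-1)^u\binom{r}{u}z^u=(1-z)^r$ gives
\[
\sum_{x\ge0}E_x(N,p;r)\,z^x=(1-z)^r\,g_{a,b}(z),\qquad a:=p-r,\quad b:=N-p-r,\quad g_{a,b}(z):=\sum_{k\ge0}\binom{a}{k}\binom{b}{k}z^k .
\]
Multiplying the Eberlein identity by $z^x$ and summing over $x$ (so that the factor $p-x$ becomes the operator $p-z\frac{d}{dz}$), and then dividing out the common factor $(1-z)^{r-1}$, one rewrites it, with $'$ denoting $d/dz$, as
\[
(a+b+1)\bigl[((a+r)-az)\,g_{a,b}-z(1-z)\,g_{a,b}'\bigr]=a(a+b+r+1)(1-z)\,g_{a-1,b}+r(b+1)\,g_{a,b+1}.
\]

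This last identity follows at once from the two contiguous relations
\[
a\,g_{a-1,b}=a\,g_{a,b}-z\,g_{a,b}',\qquad (b+1)\,g_{a,b+1}=\bigl((b+1)+az\bigr)g_{a,b}+z(1-z)\,g_{a,b}',
\]
each a one-line consequence of Pascal's rule together with $k\binom{a}{k}=a\binom{a-1}{k-1}$ and $k\binom{b}{k}=(b-k+1)\binom{b}{k-1}$ (for the second relation one first writes $z g_{a,b}'=(b+1)(g_{a,b+1}-g_{a,b})-az\,g_{a-1,b}$ and then eliminates $g_{a-1,b}$ using the first). Substituting both into the right-hand side above, the $g_{a,b}'$-terms collapse to $-z(1-z)(a+b+1)g_{a,b}'$ and --- using $a(a+b+r+1)+r(b+1)=(a+b+1)(a+r)$ --- the $g_{a,b}$-terms collapse to $(a+b+1)((a+r)-az)g_{a,b}$; together these are exactly the left-hand side.

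I do not expect a real obstacle here, since both steps are elementary. The one point that needs care --- and the reason the argument delivers \emph{all} admissible parameters with no restriction --- is to confirm that each binomial identity used in the reduction stays valid across the stated ranges $0<p<N$, $0\le x\le(p-1)\wedge(N-p)$, $0\le r\le p\wedge(N-p)$ (no denominator such as $p-x$ vanishes, and the conventional boundary terms, e.g.\ $\binom{N}{r-1}$ at $r=0$, are correctly set to zero), so that the two sides agree as genuine functions on this domain rather than merely as rational expressions.
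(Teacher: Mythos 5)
Your proof is correct, but it takes a genuinely different route from the paper's. The paper does not actually prove Lemma~\ref{lem:Hahn} in the text (it cites \cite{biQ}); the method it does exhibit in full, for the $q$-analogue Lemma~\ref{lem:qHahn}, works termwise on the defining sum of the Hahn polynomial, splitting the factor $D=[p-\ell]\bigl(\qbinom{N}{r}-\qbinom{N}{r-1}\bigr)\qbinom{p-r}{\ell}\qbinom{N-p+\ell-r}{\ell}$ in each summand into two pieces that reassemble into the two Hahn polynomials on the right-hand side. You instead pass to the dual side: via $H_r(N,p;x)=\frac{\binom{N}{r}-\binom{N}{r-1}}{\binom{p}{x}\binom{N-p}{x}}E_x(N,p;r)$ you reduce \eqref{eq:Crampe} to the contiguity relation $(p-x)(N-2r+1)E_x(N,p;r)=(p-r)(N-r+1)E_x(N-1,p-1;r)+r(N-p-r+1)E_x(N-1,p-1;r-1)$, and prove it from the factorization $\sum_x E_x(N,p;r)z^x=(1-z)^r g_{p-r,N-p-r}(z)$ together with two contiguous relations for $g_{a,b}(z)=\sum_k\binom{a}{k}\binom{b}{k}z^k$. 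I verified the reduction (the prefactor $\frac{N}{p}$ and the denominators $N-2r$, $N-2r+2$ do cancel exactly as you say), the translation of the factor $p-x$ into the operator $p-z\frac{d}{dz}$, the resulting identity for $g_{a,b}$, and both contiguous relations; all check out, including the coefficient identity $a(a+b+r+1)+r(b+1)=(a+b+1)(a+r)$. What your route buys is a structurally transparent target (a pure three-term contiguity in the degree parameter, with all square-bracketed denominators gone) and cleaner behaviour at the degenerate corner $r=p$, $N=2p$, where \eqref{eq:Crampe} itself has a $0/0$ coefficient and must be read with the paper's convention $H_p(N-1,p-1;x):=0$, whereas in your reduced identity the offending term simply carries the factor $p-r=0$. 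What the paper's termwise route buys is that it $q$-deforms verbatim — which is exactly what is needed for Lemma~\ref{lem:qHahn} — while your generating-function step would have to be replaced by $q$-hypergeometric contiguity relations to carry over.
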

Although \eqref{eq:Crampe} seems not to hold for some values of $r$,
it can be justified by interpreting it as follows.
For $r=0$, we regard $H_{-1}(N-1,p-1;x)$ as zero,
i.e., \eqref{eq:Crampe} is written by
\[
    H_0(N,p;x) = H_0(N-1,p-1;x).
\]
In fact, both $H_0(N,p;x)$ and $H_0(N-1,p-1;x)$ become 1, hence the above equation is correct. 
For $r=p$, we regard $H_p(N-1,p-1;x)$ as zero,
i.e., \eqref{eq:Crampe} is written by
\[
    H_p(N,p;x)=\frac{N(N-2p+1)}{p(N-2p+2)} H_{p-1}(N-1,p-1;x).
\]
In fact, by $H_p(N,p;x)=(-1)^x \frac{N-2p+1}{N-p+1} \frac{\binom{N}{p}}{\binom{N-p}{x}}$, the above equation is justified.

\subsection{Nonbinary Johnson schemes}
\label{sec:nonbinaryAS}

Let $n$ and $r$ be positive integers such that $r > 1$ and let $k$ be a natural number such that $0 \leq k \leq n$. 
Let $K = \set{0,1,\ldots, r-1}$ be a set of cardinality $r$. 
For a vector $\bm{x} = (x_1, x_2, \ldots, x_n) \in K^n$, its weight $w(\bm{x})$ is defined by the number of non-zero entries, namely
$w(\bm{x}) = \card{\set{i \mid x_i \neq 0 }}$.
We consider the following set $S = \set{\bm{x} \in K^n \mid w(\bm{x}) = k}$.
Note that the cardinality of $S$ is given by
\begin{equation}
    \label{eq:nonbinaryJohnson_card}
    |S|= (r-1)^k \binom{n}{k}.
\end{equation}
Let $\mathcal{D}=\set{(i,j)\mid i+j\le k,\  0\le j\le k\wedge (n-k)}$.
For two vectors $\bm{x} = (x_1, x_2, \ldots, x_n)$ and $\bm{y} = (y_1, y_2, \ldots, y_n)$ in $K^n$, we consider the number of equal non-zero entries $e(\bm{x},\bm{y})$ and the number of common non-zero entries $c(\bm{x},\bm{y})$, namely
\[
    e(\bm{x},\bm{y}) = \card{\set{i \mid x_i = y_i \neq 0}},\ 
    c(\bm{x},\bm{y}) = \card{\set{i \mid x_i \neq 0, y_i \neq 0}}. 
\]
The map $\mathcal{R}\colon S\times S \to \mathcal{D}$
is defined by $\mathcal{R}(\bm{x},\bm{y})=(i,j)$, where
\[
    e(\bm{x}, \bm{y}) = k-i-j\ \text{and}\  c(\bm{x},\bm{y}) = k-j.
\]
Then the pair $(S,\mathcal{R})$ becomes a symmetric association scheme, which we call the \emph{nonbinary Johnson scheme}. 
We denote it by $J_r(n,k)$.
Note that if $r=2$, then $J_r(n,k)$ is the Johnson scheme $J(n,k)$
and if $n=k$, then $J_r(n,k)$ is the Hamming scheme $H(n,r)$.
Therefore, we focus on the case where $r > 2$ and $n> k$. 

It is known that the indices of the primitive idempotents of $J_r(n,k)$ are also labeled by $\mathcal{D}$.
The explicit expression of eigenvalues of $J_r(n,k)$ are given by Tarnanen-Aaltonen-Goethals~\cite{TAG1985}.
For $(i,j), (x,y)\in \mathcal{D}$, we have
\begin{align}
    P_{ij}(xy) = (r-1)^j K_i(k-j, r-1;x) E_j(n-x, k-x;y).
\end{align}
Moreover, the valencies of $J_r(n,k)$ are given by
\begin{equation}
    \label{eq:nonbinaryJohnson_k}
    k_{ij}=(r-1)^j k^{H(k-j,r-1)}_i k^{J(n,k)}_j.
\end{equation}
Similarly, 
for $(i,j),(x,y)\in \mathcal{D}$,
the entries of the second eigenmatrix of $J_r(n,k)$ are given by
\begin{align}
    \label{eq:nonbinaryJohnson_q}
    Q_{ij}(xy) = \frac{\binom{n}{i}}{\binom{k}{i}} K_i(k-y, r-1;x) H_j(n-i,k-i;y).
\end{align}
Moreover, the multiplicities of $J_r(n,k)$ are given by
\begin{equation}
    \label{eq:nonbinaryJohnson_m}
    m_{ij}=\frac{\binom{n}{i}}{\binom{k}{i}} m^{H(k,r-1)}_i m^{J(n-i,k-i)}_j.
\end{equation}

\section{Proof of Theorem~\ref{thm:main1}}
\label{sec:proofThm1}

In order to prove Theorem~\ref{thm:main1},
it is sufficient to prove the following proposition from Proposition~\ref{prop:Q-TFAE}.
\begin{prop}
\label{prop:nonbinaryQ}
The Krein numbers of
$J_r(n,k)$
satisfy the following:
\begin{enumerate}[label=$(\roman*)$]
\item $q^{\alphaalt \betaalt}_{10,ij} \neq 0$ implies
\[
(\alphaalt,\betaalt) \in \set{(i+1,j),(i+1,j-1),(i,j),(i-1,j+1),(i-1,j)}
\cap \mathcal{D},
\]
i.e., $(\alphaalt,\betaalt)\le_{\mathrm{grlex}} (i,j)+(1,0)$.
The exact values of $q^{\alphaalt \betaalt}_{10,ij}$ are given by
\eqref{eq:nonbinary_Krein_1},
\eqref{eq:nonbinary_Krein_2},
\eqref{eq:nonbinary_Krein_3}
and
\eqref{eq:nonbinary_Krein_4}.
Moreover, if $(i+1,j)=(i,j)+(1,0)\in \mathcal{D}$,
then $q^{i+1,j}_{10,ij} \neq 0$ holds.

\item $q^{\alphaalt \betaalt}_{01,ij} \neq 0$ implies
\[
(\alphaalt,\betaalt) \in \set{(i,j+1),(i,j),(i,j-1)}
\cap \mathcal{D},
\]
i.e., $(\alphaalt,\betaalt)\le_{\mathrm{grlex}} (i,j)+(0,1)$.
The exact values of $q^{\alphaalt \betaalt}_{01,ij}$ are given by
\eqref{eq:nonbinary_Krein_5},
\eqref{eq:nonbinary_Krein_6}
and
\eqref{eq:nonbinary_Krein_7}.
Moreover, if $(i,j+1)=(i,j)+(0,1)\in \mathcal{D}$,
then $q^{i,j+1}_{01,ij} \neq 0$ holds.
\end{enumerate}
\end{prop}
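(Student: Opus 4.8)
The plan is to compute the Krein numbers $q^{\alphaalt\betaalt}_{10,ij}$ and $q^{\alphaalt\betaalt}_{01,ij}$ directly from the orthogonality-type formula \eqref{eq:Krein2}, using the explicit product structure of the second eigenmatrix $Q_{ij}(xy)$ given in \eqref{eq:nonbinaryJohnson_q}. The key observation is that $Q_{ij}(xy)$ factors as a Krawtchouk polynomial in the "Hamming direction" times a Hahn polynomial in the "Johnson direction", so the sum over $l=(x,y)\in\mathcal{D}$ in \eqref{eq:Krein2} should separate — at least partially — into a Hamming-type sum and a Johnson-type sum. First I would substitute the product formulas for $k_l$, $m_{\alphaalt\betaalt}$, and the three $Q$-values into \eqref{eq:Krein2}, and reorganize the resulting double sum so that the $x$-summation is recognizable as a linearization (Krein-type) computation for the Hamming scheme $H(k-y,r-1)$ and the $y$-summation as one for the Johnson scheme $J(n-i,k-i)$ or a closely related Johnson scheme.

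The two cases are handled separately but in parallel. For part (ii), the generator $E_{01}$ only perturbs the Johnson (second) coordinate: in the formula for $Q_{01}(xy)$ the Krawtchouk factor $K_0=1$ drops out, so $Q_{01}(xy)$ depends on $y$ alone (up to the $\binom{n}{0}/\binom{k}{0}$ normalization, which is $1$). Consequently $E_{01}\circ E_{ij}$ is governed entirely by the Johnson-scheme three-term recurrence, i.e.\ by the known Krein numbers $q^{\betaalt}_{1j}(J(n-i,k-i))$ from \eqref{eq:Johnson_q}; this immediately forces $(\alphaalt,\betaalt)\in\{(i,j+1),(i,j),(i,j-1)\}$ and yields the exact values. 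The nonvanishing of $q^{i,j+1}_{01,ij}$ when $(i,j+1)\in\mathcal{D}$ follows from the corresponding nonvanishing statement for $Q$-polynomial Johnson schemes. For part (i), the generator $E_{10}$ perturbs both coordinates because $Q_{10}(xy)=\frac{\binom{n}{1}}{\binom{k}{1}}K_1(k-y,r-1;x)$ still carries the variable $y$ inside $k-y$. Here I would use the Hahn-polynomial recurrence of Lemma~\ref{lem:Hahn} (equation \eqref{eq:Crampe}) to rewrite $H_j(n-i,k-i;y)$ appearing in the $l=(x,y)$ factor in terms of Hahn polynomials with parameters $(n-i-1,k-i-1)$, which is exactly what is needed to match the indices shifted by $\epsilon_1=(1,0)$; combined with the Hamming three-term recurrence \eqref{eq:Hamming_p_q} for $K_1(k-y,r-1;x)\cdot K_i(k-y,r-1;x)$, this produces a linear combination supported on the five candidate indices $(i\pm1,j)$, $(i+1,j-1)$, $(i-1,j+1)$, $(i,j)$.

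The main obstacle will be the bookkeeping in part (i): the Hamming recurrence and the Hahn recurrence both act on sums that involve the shared variable combination $k-y$, so the two recurrences are coupled rather than independent, and one must carefully track how a shift in $y$ (via the Hahn recurrence) interacts with the $y$-dependence of the Hamming parameters. I expect that after applying Lemma~\ref{lem:Hahn} and collecting terms, the five coefficients will emerge as explicit products of rational functions in $n,k,r,i,j$; verifying that exactly these five (and no others) survive, and checking the degenerate boundary cases where some of $(i\pm1,j)$ etc.\ fall outside $\mathcal{D}$ (so that the corresponding Hahn or Krawtchouk factor is interpreted as zero, as in the conventions following Lemma~\ref{lem:Hahn}), is the delicate part. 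The final nonvanishing claim $q^{i+1,j}_{10,ij}\neq0$ for $(i+1,j)\in\mathcal{D}$ should reduce to the statement that the top-degree coefficient in the Hamming direction is nonzero, which is inherited from the fact that $H(k-j,r-1)$ is $Q$-polynomial; I would isolate that leading term once the general formula is in hand. Throughout, I would cross-check the resulting expressions against the known special cases $r=2$ (Johnson scheme, using \eqref{eq:Johnson_q}) and $n=k$ (Hamming scheme, using \eqref{eq:Hamming_p_q}) as a consistency test.
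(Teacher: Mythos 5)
Your proposal follows essentially the same route as the paper's proof: substitute the factorized eigenvalues \eqref{eq:nonbinaryJohnson_q} into \eqref{eq:Krein2}, evaluate the inner $x$-sum via the Hamming-scheme Krein/orthogonality relations (forcing $\alphaalt\in\{i-1,i,i+1\}$ for part (i) and $\alphaalt=i$ for part (ii)), then handle the $y$-sum with the Hahn recurrence of Lemma~\ref{lem:Hahn} and Johnson-scheme orthogonality, reading off nonvanishing of the leading Krein numbers from the explicit formulas. The only detail you gloss over is that in part (ii) the factor $H_1(n,k;y)$ must first be rewritten as an affine function of $H_1(n-i,k-i;y)$ (both are linear in $y$) before the Krein numbers of $J(n-i,k-i)$ apply, but this is forced and does not change the support.
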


\begin{Rem}
    Proposition~\ref{prop:nonbinaryQ} was essentially already shown by \cite{biQ}.
    However, the proof is given again using an approach different from \cite{biQ} in the following.
    This calculation will help in understanding Section~\ref{sec:proofThm2}.
\end{Rem}

\subsection{Krein numbers $q^{\alphaalt \betaalt}_{10,ij}$}
\label{sec:nonbinaryQ_10}
To prove (i) of Proposition~\ref{prop:nonbinaryQ},
we compute the Krein numbers $q^{\alphaalt \betaalt}_{10,ij}$ of the nonbinary Johnson scheme
using the formula \eqref{eq:Krein2}.
Note that $J_r(n,k)$ is symmetric, so there is no need to consider complex conjugates.
By \eqref{eq:nonbinaryJohnson_card},
\eqref{eq:nonbinaryJohnson_k},
\eqref{eq:nonbinaryJohnson_q} and
\eqref{eq:nonbinaryJohnson_m},
we have
\begin{align*}
q^{\alphaalt \betaalt}_{10,ij}
=&
\frac{1}{|S| m_{\alphaalt \betaalt}}
\sum_{(x,y)\in \mathcal{D}}
k_{xy} Q_{10}(xy)Q_{ij}(xy)Q_{\alphaalt \betaalt}(xy)\\
=&
\frac{n\binom{n}{i}}{k \binom{k}{i} \binom{n}{k}(r-1)^k m^{H(k,r-1)}_\alphaalt m^{J(n-\alphaalt,k-\alphaalt)}_\betaalt}\\
&\times \sum_{y=0}^{k \wedge (n-k)}(r-1)^y k^{J(n,k)}_y
H_j(n-i,k-i;y)H_\betaalt(n-\alphaalt,k-\alphaalt;y)\\
&\times \sum_{x=0}^{k-y}
k^{H(k-y,r-1)}_x
K_1(k-y,r-1;x)K_i(k-y,r-1;x)K_\alphaalt(k-y,r-1;x).
\end{align*}
Applying \eqref{eq:Krein2} to the Hamming scheme $H(k-y,r-1)$,
we have
\begin{align}
    &\sum_{x=0}^{k-y}
    k^{H(k-y,r-1)}_x
    K_1(k-y,r-1;x)K_i(k-y,r-1;x)K_\alphaalt(k-y,r-1;x) \notag\\
    &=
    (r-1)^{k-y}m^{H(k-y,r-1)}_\alphaalt 
    q^\alphaalt_{1i}(H(k-y,r-1)). \label{eq:H(k-y,r-1)}
\end{align}
Using 
\eqref{eq:Hamming_k_m}, \eqref{eq:Johnson_k},
\eqref{eq:H(k-y,r-1)} and
$\binom{n}{i}\binom{n-i}{k-i}=\binom{n}{k}\binom{k}{i}$,
we have
\begin{align}
q^{\alphaalt \betaalt}_{10,ij}
=&
\frac{n}{k \binom{n-i}{k-i} \binom{k}{\alphaalt} m^{J(n-\alphaalt,k-\alphaalt)}_\betaalt}
\sum_{y=0}^{k \wedge (n-k)} \binom{k}{y}\binom{n-k}{y}
H_j(n-i,k-i;y)H_\betaalt(n-\alphaalt,k-\alphaalt;y)\notag \\
&\times \binom{k-y}{\alphaalt}
q^\alphaalt_{1i}(H(k-y,r-1)).
\label{eq:Krein_alpha_beta}
\end{align}
According to \eqref{eq:Hamming_p_q},
$\alphaalt$ must be $i-1,i$ or $i+1$.
In the below argument, we will use
\begin{equation}
    \label{eq:binom_rel}
    \binom{k}{y}\binom{k-y}{i}=\binom{k}{i}\binom{k-i}{y}.
\end{equation}

When the case $\alphaalt=i$ in \eqref{eq:Krein_alpha_beta},
by \eqref{eq:Johnson_k} and \eqref{eq:binom_rel},
we have
    \[
    q^{i \betaalt}_{10,ij}
    =
    \frac{n i (r-3)}{k \binom{n-i}{k-i} m^{J(n-i,k-i)}_\betaalt}
    \sum_{y=0}^{k \wedge (n-k)} k^{J(n-i,k-i)}_y
    H_j(n-i,k-i;y)H_\betaalt(n-i,k-i;y).
    \]
Since $H_j(n-i,k-i;y)=0$ if $y > (k-i)\wedge ((n-i)-(k-i))$,
the range of $y$ in the above summation will be $0\le y \le (k-i)\wedge (n- k)$.
The orthogonal relation \eqref{eq:Krein3} for $J(n-i,k-i)$ implies
\[
q^{i \betaalt}_{10,ij}=
    \frac{n i (r-3)}{k \binom{n-i}{k-i} m^{J(n-i,k-i)}_\betaalt}
    |J(n-i,k-i)| m^{J(n-i,k-i)}_j \delta_{j\betaalt}
\]
and $\betaalt$ must be $j$.
Since $|J(n-i,k-i)| =\binom{n-i}{k-i}$,
we have
\begin{equation}
    \label{eq:nonbinary_Krein_1}
    q^{i j}_{10,ij}=\frac{n i (r-3)}{k}.
\end{equation}

When the case $\alphaalt=i+1$ in \eqref{eq:Krein_alpha_beta},
by \eqref{eq:binom_rel} and \eqref{eq:Johnson_k}
we have
\[
q^{i+1, \betaalt}_{10,ij}
=
\frac{n(i+1)}{k \binom{n-i}{k-i} m^{J(n-i-1,k-i-1)}_\betaalt}
\sum_{y=0}^{k \wedge (n-k)}
k^{J(n-i-1,k-i-1)}_y
H_j(n-i,k-i;y)H_\betaalt(n-i-1,k-i-1;y).
\]
Since $H_\betaalt(n-i-1,k-i-1;y)=0$ if $y > (k-i-1)\wedge ((n-i-1)-(k-i-1))$,
the range of $y$ in the above summation will be $0\le y \le (k-i-1)\wedge (n- k)$.
From Lemma~\ref{lem:Hahn},
\[
\textstyle{
H_j(n-i,k-i;y)
=
\frac{n-i}{k-i}  
\left(
\frac{k-i-j}{n-i-2j} H_j(n-i-1,k-i-1;y)
+
\frac{n-k-j+1}{n-i-2j+2}H_{j-1}(n-i-1,k-i-1;y)
\right)
}
\]
holds.
Then the orthogonal relation \eqref{eq:Krein3}  for $J(n-i-1,k-i-1)$ implies
\begin{align*}
q^{i+1, \betaalt}_{10,ij}
=&
\frac{n(i+1)(n-i)}{k (k-i) \binom{n-i}{k-i} m^{J(n-i-1,k-i-1)}_\betaalt}
|J(n-i-1,k-i-1)|m^{J(n-i-1,k-i-1)}_\betaalt\\
&\times \left(
\frac{k-i-j}{n-i-2j} \delta_{\betaalt ,j}
+
\frac{n-k-j+1}{n-i-2j+2}\delta_{\betaalt ,j-1}
\right)\\
=&
\frac{n(i+1)}{k}
\left(
\frac{k-i-j}{n-i-2j} \delta_{\betaalt ,j}
+
\frac{n-k-j+1}{n-i-2j+2}\delta_{\betaalt ,j-1}
\right).
\end{align*}
In the last line, we used
\[
    |J(n-i-1,k-i-1)|=\binom{n-i-1}{k-i-1}=\binom{n-i}{k-i}\frac{k-i}{n-i}.
\]
Thus, $\betaalt$ must be $j$ or $j-1$, and we have
\begin{equation}
    \label{eq:nonbinary_Krein_2}
    q^{i+1,j}_{10,ij} = \frac{n(i+1)(k-i-j)}{k(n-i-2j)},
\quad
q^{i+1,j-1}_{10,ij} = \frac{n(i+1)(n-k-j+1)}{k(n-i-2j+2
)}.
\end{equation}
Moreover, if $(i+1,j)\in \mathcal{D}$,
then $(i+1)+j\le k$, i.e., $k-i-j\ge 1$ and $j\le n-k$, i.e., $n-i-2j\ge 1$.
This implies $q^{i+1,j}_{10,ij} \neq 0$.

When the case $\alphaalt=i-1$ in \eqref{eq:Krein_alpha_beta},
by
\[
\binom{k}{i-1}=\frac{i}{k-i+1}\binom{k}{i}
\quad  \text{and}\quad  
\binom{k-y}{i-1}= \frac{i}{k-y-i+1}\binom{k-y}{i},
\]
\eqref{eq:binom_rel} and \eqref{eq:Johnson_k},
we have
\[
q^{i-1, \betaalt}_{10,ij}
=
\frac{n(r-2)(k-i+1)}{k \binom{n-i}{k-i} m^{J(n-i+1,k-i+1)}_\betaalt}
\sum_{y=0}^{k \wedge (n-k)} k^{J(n-i,k-i)}_y
H_j(n-i,k-i;y)H_\betaalt(n-i+1,k-i+1;y).
\]
Since $H_j(n-i,k-i;y)=0$ if $y > (k-i)\wedge ((n-i)-(k-i))$,
the range of $y$ in the above summation will be $0\le y \le (k-i)\wedge (n- k)$.
From Lemma~\ref{lem:Hahn},
\begin{align*}
    &H_\betaalt(n-i+1,k-i+1;y)\\
    &=
    \frac{n-i+1}{k-i+1}  
    \left(
    \frac{k-i+1-\betaalt}{n-i+1-2\betaalt} H_\betaalt(n-i,k-i;y)
    +
    \frac{n-k-\betaalt+1}{n-i-2\betaalt +3}H_{\betaalt-1}(n-i,k-i;y)
    \right).
\end{align*}
holds.
Then the orthogonal relation \eqref{eq:Krein3} for $J(n-i,k-i)$ implies
\[
q^{i-1, \betaalt}_{10,ij}
=
\frac{n(r-2)(n-i+1)}{k m^{J(n-i+1,k-i+1)}_\betaalt}
m^{J(n-i,k-i)}_j
\left(
\frac{k-i+1-\betaalt}{n-i+1-2\betaalt} \delta_{j,\betaalt}
+
\frac{n-k-\betaalt+1}{n-i-2\betaalt +3}\delta_{j,\betaalt-1}
\right).
\]
Thus, $\betaalt$ must be $j$ or $j+1$.
If $\betaalt=j$, then the Krein number becomes
\begin{align*}
    q^{i-1, j}_{10,ij}
    =&
    \frac{n(r-2)(n-i+1)}{k m^{J(n-i+1,k-i+1)}_j}
    m^{J(n-i,k-i)}_j
    \frac{k-i+1-j}{n-i+1-2j}.
\end{align*}
By \eqref{eq:Johnson_m}, the above multiplicities can be calculated as
\[
    m^{J(n-i+1,k-i+1)}_j
    =\frac{n-i-2j+2}{n-i-j+2}\binom{n-i+1}{j}
    =\frac{(n-i-2j+2)(n-i+1)}{(n-i-j+2)(n-i-j+1)}\binom{n-i}{j}
\]
and
\[
    m^{J(n-i,k-i)}_j=\frac{n-i-2j+1}{n-i-j+1}\binom{n-i}{j}.
\]
Thus, we have
\begin{equation}
    q^{i-1, j}_{10,ij}=
    \frac{n(r-2)(n-i-j+2)(n-i-j+1)}{k(n-i-2j+2)}.
    \label{eq:nonbinary_Krein_3}
\end{equation}
If $\betaalt=j+1$, the Krein number becomes
\begin{align*}
    q^{i-1, j+1}_{10,ij}
    =&
    \frac{n(r-2)(n-i+1)}{k m^{J(n-i+1,k-i+1)}_{j+1}}
    m^{J(n-i,k-i)}_j
    \frac{n-k-j}{n-i-2j+1}.
\end{align*}
By \eqref{eq:Johnson_m}, the multiplicity $m^{J(n-i+1,k-i+1)}_{j+1}$ can be calculated as
\[
    m^{J(n-i+1,k-i+1)}_{j+1}
    =\frac{n-i-2j}{n-i-j+1}\binom{n-i+1}{j+1}
    =\frac{(n-i-2j)(n-i+1)}{(n-i-j+1)(j+1)}\binom{n-i}{j}.
\]
Thus, we have
\begin{equation}
    q^{i-1, j+1}_{10,ij}=
    \frac{n(r-2)(j+1)(n-k-j)}{k (n-i-2j)}.
    \label{eq:nonbinary_Krein_4}
\end{equation}

\subsection{Krein numbers $q^{\alphaalt \betaalt}_{01,ij}$}
We show (ii) of Proposition~\ref{prop:nonbinaryQ} similarly as in Section~\ref{sec:nonbinaryQ_10}.
By \eqref{eq:nonbinaryJohnson_card},
\eqref{eq:nonbinaryJohnson_k},
\eqref{eq:nonbinaryJohnson_q} and
\eqref{eq:nonbinaryJohnson_m},
we have
\begin{align*}
q^{\alphaalt \betaalt}_{01,ij}
=&
\frac{1}{|S| m_{\alphaalt \betaalt}}
\sum_{y=0}^{k \wedge (n-k)}\sum_{x=0}^{k-y}
k_{xy} Q_{01}(xy)Q_{ij}(xy)Q_{\alphaalt \betaalt}(xy)\\
=&
\frac{\binom{n}{i}}{\binom{n}{k}\binom{k}{i}(r-1)^k  m^{H(k,r-1)}_\alphaalt m^{J(n-\alphaalt,k-\alphaalt)}_\betaalt}\\
&\times \sum_{y=0}^{k \wedge (n-k)}
(r-1)^y  k^{J(n,k)}_y
H_1(n,k;y)H_j(n-i,k-i;y)H_\betaalt(n-\alphaalt,k-\alphaalt;y)\\
&\times
\sum_{x=0}^{k-y}k^{H(k-y,r-1)}_x K_i(k-y,r-1;x)K_\alphaalt(k-y,r-1;x).
\end{align*}
Applying \eqref{eq:Krein3} to the Hamming scheme $H(k-y,r-1)$,
we have
\[
\sum_{x=0}^{k-y}k^{H(k-y,r-1)}_x K_i(k-y,r-1;x)K_\alphaalt(k-y,r-1;x)
=
(r-1)^{k-y}m^{H(k-y,r-1)}_\alphaalt \delta_{i,\alphaalt}.  
\]
Hence, $\alphaalt$ must be $i$ and 
by \eqref{eq:Hamming_k_m},
we have
\begin{align}
    q^{i \betaalt}_{01,ij}
=&
\frac{\binom{n}{i}}{\binom{n}{k}\binom{k}{i} ^2
m^{J(n-i,k-i)}_\betaalt}
\sum_{y=0}^{k \wedge (n-k)}
 k^{J(n,k)}_y\binom{k-y}{i} H_1(n,k;y)
H_j(n-i,k-i;y)H_\betaalt(n-i,k-i;y).
\label{eq:Krein_alpha_beta_2}
\end{align}
Since $H_j(n-i,k-i;y)=0$ if $y > (k-i)\wedge ((n-i)-(k-i))$,
the range of $y$ in the above summation will be $0\le y \le (k-i)\wedge (n- k)$.
To carry out the above summation calculations,
we replace $H_1(n,k;y)$ with another expression.
By the definition of the Hahn polynomials, we have
\begin{align}
    H_1(n,k;y)&=\frac{n-1}{k(n-k)}(k(n-k)-ny),\label{eq:H_1(y,n,k)}\\
    H_1(n-i,k-i;y)&=\frac{n-i-1}{(k-i)(n-k)}((k-i)(n-k)-(n-i)y). \label{eq:H_1(y,n-i,k-i)}
\end{align}
Eliminating $y$ from \eqref{eq:H_1(y,n,k)} and \eqref{eq:H_1(y,n-i,k-i)} yields the following equation:
\begin{equation}
    H_1(n,k;y)=
    \frac{n-1}{k}
\left(
\frac{i(n-k)}{n-i}+
\frac{n(k-i)}{(n-i-1)(n-i)}H_1(n-i,k-i;y)
\right). \label{eq:theta-ast}
\end{equation}
Thus, by 
\eqref{eq:Johnson_k},
\eqref{eq:binom_rel},
\eqref{eq:theta-ast},
\eqref{eq:Krein2} and \eqref{eq:Krein3} for $J(n-i,k-i)$,
we have
\begin{align*}
q^{i \betaalt}_{01,ij}
=&
\frac{\binom{n}{i}}{\binom{n}{k}\binom{k}{i}
m^{J(n-i,k-i)}_\betaalt}
|J(n-i,k-i)|m^{J(n-i,k-i)}_\betaalt\\
&\times \frac{n-1}{k} \left(
\frac{i(n-k)}{n-i} \delta_{j,\betaalt}+
\frac{n(k-i)}{(n-i-1)(n-i)}
q^\betaalt_{1j}(J(n-i,k-i))
\right)\\
=&
\frac{n-1}{k}\left(\frac{i(n-k)}{n-i} \delta_{j,\betaalt}+
\frac{n(k-i)}{(n-i-1)(n-i)}
q^\betaalt_{1j}(J(n-i,k-i))
\right).
\end{align*}
In the last line, we used
\[
    |J(n-i,k-i)|=\binom{n-i}{k-i}\ \text{and}\ 
    \binom{n}{i}\binom{n-i}{k-i}=\binom{n}{k}\binom{k}{i}.
\]
This implies $\betaalt$ must be $j,j+1$ or $j- 1$, and we have
\begin{align}
q^{i j}_{01,ij} 
& = \frac{n-1}{k} \left(
\frac{i(n-k)}{n-i} +\frac{n(k-i)}{(n-i-1)(n-i)}
q^j_{1j}(J(n-i,k-i))\right)\notag\\
&=
\textstyle{
n-1
-\frac{n(n-1)}{k(n-k)(n-i-2j+1)}\left(
\frac{(k-i-j)(n-i-j+1)(n-k-j)}{(n-i-2j)}
+\frac{j(k-i-j+1)(n-k-j+1)}{(n-i-2j+2)}
\right)};
\label{eq:nonbinary_Krein_5}\\
    q^{i,j-1}_{01,ij} & =
\frac{n-1}{k} \frac{n(k-i)}{(n-i-1)(n-i)}
q^{j-1}_{1j}(J(n-i,k-i))\notag\\
& =
\frac{n(n-1)(k-i-j+1)(n-i-j+2)(n-k-j+1)}{k(n-k)(n-i-2j+2)(n-i-2j+3)};
\label{eq:nonbinary_Krein_6}\\
    q^{i,j+1}_{01,ij} & =
\frac{n-1}{k} \frac{n(k-i)}{(n-i-1)(n-i)}
q^{j+1}_{1j}(J(n-i,k-i))\notag\\
&=
\frac{n(n-1)(j+1)(k-i-j)(n-k-j)}{k(n-k)(n-i-2j)(n-i-2j-1)}.
\label{eq:nonbinary_Krein_7}
\end{align}
Moreover if $(i,j+1)\in \mathcal{D}$,
then $i+(j+1)\le k$, i.e., $k-i-j\ge 1$ and $j+1\le n-k$, i.e., $n-i-2j-1\ge 1$.
This implies $q^{i j+1}_{01,ij} \neq 0$.
Note that $q^{ij}_{01,ij}$
coincides with
$\widehat{\cB}_{ij}$
in \cite{biQ}.

\begin{Rem}
$J_r(n,k)$ can be regarded as the Gelfand pair
\[(\mathfrak{S}_{r-1} \wr \mathfrak{S}_n,(\mathfrak{S}_{r-2} \wr \mathfrak{S}_k)\times (\mathfrak{S}_{r-1} \wr \mathfrak{S}_{n-k})).\]
A part of the proof of Proposition~\ref{prop:nonbinaryQ} can also be shown using the representation theory of $\mathfrak{S}_{r-1} \wr \mathfrak{S}_n$.
Specifically, the irreducible representations of $\mathfrak{S}_{r-1} \wr \mathfrak{S}_n$ are known from Stein~\cite{Stein2017},
and the irreducible representations of $\mathfrak{S}_{r-1} \wr \mathfrak{S}_n$ that appears in the permutation representation of the above Gelfand pair are determined by the result of Ceccherini-Silberstein, Scarabotti, Tolli~\cite{CST2006}.
Then, using the Littlewood-Richardson rule for $\mathfrak{S}_{r-1} \wr \mathfrak{S}_n$, 
we can investigate for which $(\alphaalt,\betaalt)$, Krein numbers $q^{\alphaalt\betaalt}_{10,ij}$ and $q^{\alphaalt\betaalt}_{01,ij}$ become zero.
However, it is so far difficult to show  $q^{i+1,j}_{10,ij}\neq 0$ and $q^{i,j+1}_{01,ij}\neq 0$ using this method.
\end{Rem}

\section{Association schemes obtained from attenuated spaces}
\label{sec:attenuated}

In this section, we describe the association schemes obtained from attenuated spaces,
which is a generalization of the bilinear forms schemes and the Grassmann schemes,
and in this paper, we require various properties of these schemes. 
Therefore, 
in Subsection~\ref{sec:bilinear_Grassmann}, 
we first describe the properties of the bilinear forms schemes and the Grassmann schemes.
In Subsection~\ref{sec:ASattenuated}, we describe the association schemes obtained from attenuated spaces.
In this section,
let $q$ be a prime power, and $n$, $m$ and $l$ be positive integers.

\subsection{Bilinear forms schemes and Grassmann schemes}
\label{sec:bilinear_Grassmann}
Let $\mathbb{F}_q$ be the finite field of size $q$,
and let $V$ and $E$ be $n$-dimensional and $l$-dimensional vector spaces
over $\mathbb{F}_q$, respectively,
and let $L(V,E)$ denote the set of all linear maps from $V$ to $E$.
Then the size of $L(V,E)$ is $q^{n l}$.
The set $L(V,E)$ together with the nonempty relations
\[R_i=\set{(f,g)\in L(V,E) \times L(V,E) \mid \rank (f-g)=i}\]
is an $n\wedge l$-class symmetric association scheme called the
\emph{bilinear forms scheme} $H_q(n,l)$.
It is well known that bilinear forms schemes are $P$- and $Q$-polynomial association schemes.
The first eigenmatrix $P=(P^{H_q(n,l)}_i(j))$ of $H_q(n,l)$ is given by the \emph{generalized Krawtchouk polynomials}
$K_i(n,l;q;j)$
(see Delsarte~\cite{Delsarte1978bfo}), namely
\begin{eqnarray*}
P^{H_q(n,l)}_i(j)&=&K_i(n,l;q;j)\\
&=&\sum^{i}_{u=0}(-1)^{i-u}q^{u l+\binom{i-u}{2}}\qbinom{n-u}{n-i}
\qbinom{n-j}{u},
\end{eqnarray*}
where 
\[
    \qbinom{n}{m}:=\prod ^{m-1}_{i=0}\frac{q^{n}-q^i}{q^{m}-q^i}
\]
is called the \emph{$q$-binomial coefficient} for $m<n$.
If either $i$ or $j$ is outside $\{0,1,\ldots ,n\wedge l\}$,
then we define $K_i(n,l;q;j)=0$.
Note that bilinear forms schemes are self-dual, i.e., 
the second eigenmatrix of $H_q(n,l)$ coincides with the first eigenmatrix of $H_q(n,l)$
(see~\cite{Delsarte1978bfo}).
Moreover, the valencies $k^{H_q(n,l)}_i$ and the multiplicities $m^{H_q(n,l)}_i$
of $H_q(n,l)$ are given as
\begin{equation}
\label{eq:bilinear_k_m}
k^{H_q(n,l)}_i=
m^{H_q(n,l)}_i=\qbinom{n}{i}\qbinom{l}{i}
\prod^{i-1}_{u=0}(q^i-q^{u}).
\end{equation}
Also, the Krein numbers $q^\alphaalt_{1i}(H_q(m-y,l))$
coincide with
the  intersection numbers $p^\alphaalt_{1i}(H_q(m-y,l))$
and these numbers
satisfy
\begin{equation}
    \label{eq:bilinear_p_q}
    q^\alphaalt_{1i}(H_q(m-y,l))=
    \begin{cases}
        q^{2i-2}(q-1)[l-i+1][m-y-i+1] & \text{if $\alphaalt=i-1$;}\\
        [i](q^{m-y}+q^{l}-q^i-q^{i-1}-1) & \text{if $\alphaalt=i$;}\\
        q^i [i+1] & \text{if $\alphaalt=i+1$;}\\
        0 & \text{otherwise,}
    \end{cases}
\end{equation}
where $[k]:=(q^k-1)/(q-1)$ is the $q$-number.
One can easily check that
\[
\qbinom{n}{m}=\prod ^{m-1}_{i=0}\frac{[n-i]}{[m-i]}.
\]
Assume $m<n$.
The set of all $m$-dimensional
subspaces of $V$ is denoted by $\qbinom{V}{m}$.
The set $\qbinom{V}{m}$ together with the nonempty relations
\[
R_i=\set{(W_1,W_2)\in {\textstyle \qbinom{V}{m} \times \qbinom{V}{m}} \mid \dim W_1\cap W_2=m-i}
\]
is an
$m\wedge (n-m)$-class symmetric association scheme called the
\emph{Grassmann scheme} $\mathrm{Gr}_q(n,m)$.
It is well known that Grassmann schemes are $P$- and $Q$-polynomial association schemes.
The size of $\qbinom{V}{m}$ is $\qbinom{n}{m}$.

The first eigenmatrix $P=(P^{\mathrm{Gr}_q(n,m)}_i(j))$ of the Grassmann scheme
$\mathrm{Gr}_q(n,m)$ is given by the \emph{generalized Eberlein polynomials} $E_i(n,m;q;j)$ (see Delsarte~\cite{Delsarte1976paa}), namely
\begin{eqnarray*}
P^{\mathrm{Gr}_q(n,m)}_i(j)&=&E_i(n,m;q;j)\\
&=&\sum^{i}_{u=0}(-1)^{i-u}q^{u j+\binom{i-u}{2}}\qbinom{m-u}{m-i}
\qbinom{m-j}{u}\qbinom{n-m+u-j}{u}.
\end{eqnarray*}
If either $i$ or $j$ is outside of $\{0,1,\ldots ,m\wedge (n-m)\}$,
then we define $E_i(n,m;q;j)=0$.
Furthermore, since the valencies $k^{\mathrm{Gr}_q(n,m)}_i$ and the multiplicities $m^{\mathrm{Gr}_q(n,m)}_i$
of the Grassmann scheme $\mathrm{Gr}_q(n,m)$ are given as
\begin{equation}
    \label{eq:Grassmann_k}
    k^{\mathrm{Gr}_q(n,m)}_i=q^{i^2}\qbinom{n-m}{i}\qbinom{m}{i}   
\end{equation}
and
\begin{equation}
    \label{eq:Grassmann_m}
    m^{\mathrm{Gr}_q(n,m)}_i=\qbinom{n}{i}-\qbinom{n}{i-1},
\end{equation}
respectively 
(cf.~\cite{BBIT2021,BI1984,BCN1989}).
The second eigenmatrix $Q=(Q^{\mathrm{Gr}_q(n,m)}_i(j))$
of the Grassmann scheme
$\mathrm{Gr}_q(n,m)$
is given by 
the \emph{$q$-Hahn polynomials}
$Q_i(n,m;q;j)$
as follows;
\begin{align*}
    Q^{\mathrm{Gr}_q(n,m)}_i(j) =& Q_i(n,m;q;j)\\
    =&\frac{\qbinom{n}{i}-\qbinom{n}{i-1}}
    {q^{j^2}\qbinom{n-m}{j}\qbinom{m}{j}}
    \sum^{j}_{u=0}(-1)^{j-u}q^{u i+\binom{j-u}{2}}\qbinom{m-u}{m-j}
    \qbinom{m-i}{u}\qbinom{n-m+u-i}{u}.
\end{align*}
If either $i$ or $j$ is outside $\{0,1,\ldots ,m\wedge (n-m)\}$,
then we define $Q_i(n,m;q;j)=0$.
It is known that
\[Q_{1}(n,m;q;j)=h^\ast(n,m) 
\left(\frac{[n-m][m]}{[n]}+[-j]\right),\]
where $h^\ast(n,m)=\frac{q[n][n-1]}{[n-m][m]}$
(cf.~\cite{BBIT2021}).
Hence, we have
\begin{equation}
    \label{eq:q^-x}
    [-j]=
    \frac{Q_{1}(n,m;q;j)}{h^\ast(n,m)}-\frac{[n-m][m]}{[n]}.    
\end{equation}
Also, we know
\begin{equation}
\label{eq:Grassmann_q}
q^\betaalt_{1j}(\mathrm{Gr}_q(n-i,m-i))=
\begin{cases}
h^\ast(n-i,m-i)B^\ast_{j-1}(n-i,m-i) & \text{if $\betaalt=j-1$;}\\
h^\ast(n-i,m-i)A^\ast_{j}(n-i,m-i) & \text{if $\betaalt=j$;}\\
h^\ast(n-i,m-i)C^\ast_{j+1}(n-i,m-i) & \text{if $\betaalt=j+1$;}\\
0 & \text{otherwise,}
\end{cases}
\end{equation}
where
\begin{align*}
    B^\ast_j(n-i,m-i)&=\frac{[m-i-j][n-i-j+1][n-m-j]}{q^j [n-i-2j][n-i-2j+1]},\\
    C^\ast_j(n-i,m-i)&=\frac{[j][n-m-j+1][m-i-j+1]}{q^j [n-i-2j+2][n-i-2j+1]},\\
    A^\ast_j(n-i,m-i)&=\frac{[n-m][m-i]}{[n-i]}-B^\ast_j(n-i,m-i)-C^\ast_j(n-i,m-i).
\end{align*}

\subsection{Association schemes based on attenuated spaces}
\label{sec:ASattenuated}

Let us recall the definition of the association schemes obtained from attenuated spaces.
Fix an $l$-dimensional subspace $W$ of the $(n+l)$-dimensional vector space $\F_q^{n+l}$
over $\F_q$.
The corresponding \emph{attenuated space} associated with $\F_q^{n+l}$ and $W$
is the collection of all subspaces of $\F_q^{n+l}$ intersecting trivially with $W$.
For a positive integer $m$ with $m\le n$,
let $X$ be the set of $m$-dimensional subspaces of the attenuated space associated with $\F_q^{n+l}$ and $W$.
Let 
\[
\mathcal{D}:=\{(i,j)\mid 0\le i \le m\wedge (n-m),\  0\le j\le (m-i)\wedge l\}
\]
and $\mathcal{R}\colon X \times X \to \mathcal{D}$ is defined by
$\mathcal{R}(V,V')=(i,j)$ if
\[
\dim V/W\cap V'/W=m-i \  \text{and}\  \dim V\cap V'=(m-i)-j,
\]
where $V/W$ stands for $(V+W)/W$ simply.
Then $\mathfrak{X}=(X,\mathcal{R})$ is a symmetric association scheme,
and called the \emph{association scheme obtained from the attenuated space} associated with $\F_q^{n+l}$ and $W$. 
For details of the association schemes obtained from attenuated spaces, 
see Bernard et al.~\cite{bi}, 
Wang-Guo-Li~\cite{WGL2010}
or Kurihara~\cite{Kurihara2013}.
The association scheme $\mathfrak{X}$
is a common generalization of the Grassmann scheme $\mathrm{Gr}_q(n,m)$
and the bilinear forms scheme $H_q(n,l)$.
In fact, if $l=0$, then the association scheme
$\mathfrak{X}$ is isomorphic to
the Grassmann scheme $\mathrm{Gr}_q(n,m)$ and if $m=n$,
then the association scheme $\mathfrak{X}$
is isomorphic to the bilinear forms scheme $H_q(n,l)$.
Moreover, the association scheme
$\mathfrak{X}$ is also a $q$-analogue of
the nonbinary Johnson scheme.
Bernard et al.~\cite{bi} proved that in the case of $l\ge m$,
$\mathfrak{X}$ becomes bivariate $P$-polynomial
of type $(1,0)$ on the domain $\mathcal{D}$.
Subsequently, Bannai et al.~\cite{BKZZ} showed that $\mathfrak{X}$ is a bivariate $P$-polynomial association scheme with respect to $\le_\mathrm{grlex}$ in the sense of Definition~\ref{df:abPpoly} even if $l<m$.

The cardinality of $\mathfrak{X}=(X,\mathcal{R})$ is given by
\begin{equation}    
\label{eq:attenuated_card}
|X|=q^{ml}\qbinom{n}{m}.
\end{equation}
It is known that the indices of the primitive idempotents of $\mathfrak{X}$ are also labeled by $\mathcal{D}$.
By \cite{Kurihara2013}, the explicit expressions of the first and second eigenmatrices of $\mathfrak{X}$ are given by 
\begin{equation}
    \label{eq:attenuated_P}
    P_{ij}(xy)=
    q^{j l}K_{i}(m-j,l;q;x)E_j(n-x,m-x;q;y)
    \end{equation}
    and
    \begin{equation}
    \label{eq:attenuated_Q}
    Q_{ij}(xy)=
    \frac{\qbinom{n}{m}}{\qbinom{n-i}{m-i}}K_i(m-y,l;q;x)
    Q_{j}(n-i,m-i;q;y).
    \end{equation}    
Moreover, the valencies $k_{ij}$ and the multiplicities $m_{ij}$ of $\mathfrak{X}$ are given by
\begin{align}
    k_{ij}&=q^{j l}k_i^{H_q(m-j,l)}k_j^{\mathrm{Gr}_q(n,m)}; \label{eq:attenuated_k}\\
    m_{ij}&=\frac{\qbinom{n}{m}}{\qbinom{n-i}{m-i}} m^{H_q(m,l)}_i m^{\mathrm{Gr}_q(n-i,m-i)}_j. \label{eq:attenuated_m}
\end{align}

In concluding this subsection,
we give some formulas and a lemma that will be used in the next sections.
First, the following formula may be easily verified:
\begin{align}
    [a]-[b] &= q^b [a-b]\  \text{for $a>b$};\label{iden1}\\
    \qbinom{N}{r} & =\frac{[N-r+1]}{[r]}\qbinom{N}{r-1};\label{iden2}\\
    \qbinom{N}{r} & =\frac{[N]}{[r]}\qbinom{N-1}{r-1};\label{iden3}\\
    \qbinom{N}{r} & =\frac{[N]}{[N-r]}\qbinom{N-1}{r};\label{iden4}\\
    \qbinom{N}{r}-\qbinom{N}{r-1} & 
    =\left(1-\frac{[r]}{[N-r+1]}\right)\qbinom{N}{r}
    =q^r\frac{[N-2r+1]}{[N-r+1]}\qbinom{N}{r}. \label{iden5}
\end{align}
The following lemma is a $q$-analog of Lemma~\ref{lem:Hahn}.
The proof of the lemma is also similar to \cite{biQ}.
\begin{lem}
\label{lem:qHahn} Let $p,N$ be integers such that $0<p<N$.
For $0\le x \le (p-1)\wedge (N-p)$
and $0\le r \le p\wedge (N-p)$, we have
\begin{equation}
\label{eq:q-Crampe}
\textstyle{
Q_r(N,p;q;x)
=\frac{[N]}{[p]} 
\left( 
\frac{[p-r]}{[N-2r]}Q_r(N-1,p-1;q;x)
+q^{p-r+1}\frac{[N-p-r+1]}{[N-2r+2]}Q_{r-1}(N-1,p-1;q;x)
\right).
}
\end{equation}
\end{lem}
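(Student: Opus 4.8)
The plan is to prove the $q$-analog recurrence \eqref{eq:q-Crampe} by mimicking the proof of Lemma~\ref{lem:Hahn} (which is attributed to \cite{biQ}), working directly with the explicit hypergeometric-type sum defining the $q$-Hahn polynomials $Q_r(N,p;q;x)$. Recall that, up to the normalizing factor $\bigl(\qbinom{N}{r}-\qbinom{N}{r-1}\bigr)/\bigl(q^{r^2}\qbinom{N-p}{r}\qbinom{p}{r}\bigr)$, the polynomial $Q_r(N,p;q;x)$ is the sum $\sum_{u=0}^{r}(-1)^{r-u}q^{ux+\binom{r-u}{2}}\qbinom{p-u}{p-r}\qbinom{p-x}{u}\qbinom{N-p+u-x}{u}$, i.e.\ the generalized Eberlein polynomial $E_r(N,p;q;x)$ in the other normalization. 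So the first step is to reduce \eqref{eq:q-Crampe} to an identity purely among the $E_r$'s (the unnormalized dual Hahn / $q$-Eberlein polynomials), absorbing the valency and multiplicity prefactors using \eqref{iden2}--\eqref{iden5}; this is exactly the same bookkeeping that turns Lemma~\ref{lem:Hahn} into its stated form.

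Next I would establish the core three-term identity at the level of the summands. After clearing the prefactors, the claim becomes that $E_r(N,p;q;x)$ equals an explicit $\F_q$-rational combination of $E_r(N-1,p-1;q;x)$ and $E_{r-1}(N-1,p-1;q;x)$. I would prove this by splitting the defining sum for $E_r(N-1,p-1;q;x)$ and $E_{r-1}(N-1,p-1;q;x)$ into their individual terms indexed by $u$, and showing term-by-term (after re-indexing one of the two sums by $u\mapsto u-1$) that the $q$-binomial coefficients $\qbinom{(p-1)-u}{(p-1)-r}$, $\qbinom{(p-1)-x}{u}$, $\qbinom{(N-1)-(p-1)+u-x}{u}$ recombine, via the Pascal-type relations \eqref{iden2}--\eqref{iden4} and the elementary $[a]-[b]=q^{b}[a-b]$ of \eqref{iden1}, into the corresponding term $q^{ux+\binom{r-u}{2}}\qbinom{p-u}{p-r}\qbinom{p-x}{u}\qbinom{N-p+u-x}{u}$ of $E_r(N,p;q;x)$. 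The powers of $q$ — the $q^{p-r+1}$ in the second term and the $q^{ux+\binom{r-u}{2}}$ inside the sums — must be tracked carefully; this is where the $q$-deformation genuinely differs from the classical case, and it is the part most prone to sign/exponent slips.

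Finally I would treat the boundary values of $r$ exactly as the excerpt does for Lemma~\ref{lem:Hahn}: for $r=0$ one interprets $Q_{-1}(N-1,p-1;q;x)=0$ and checks that \eqref{eq:q-Crampe} degenerates to $Q_0(N,p;q;x)=Q_0(N-1,p-1;q;x)$ (both sides being $1$), and for $r=p$ one interprets $Q_p(N-1,p-1;q;x)=0$, so that \eqref{eq:q-Crampe} reduces to a single-term identity $Q_p(N,p;q;x)=q\,\frac{[N][N-2p+1]}{[p][N-2p+2]}Q_{p-1}(N-1,p-1;q;x)$, which one verifies from the closed form of the top $q$-Hahn polynomial (the $q$-analog of $Q_p(N,p;x)=(-1)^{x}\frac{N-2p+1}{N-p+1}\binom{N}{p}/\binom{N-p}{x}$, namely $Q_p(N,p;q;x)=(-1)^{x}q^{\binom{x}{2}-\binom{p}{2}}\,q^{p}\,\frac{[N-2p+1]}{[N-p+1]}\,\qbinom{N}{p}/\qbinom{N-p}{x}$ or the analogous formula).

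The main obstacle I anticipate is not conceptual but computational: getting every power of $q$ correct in the term-by-term matching of step two, since the $q$-Pascal identities \eqref{iden2}--\eqref{iden4} each introduce their own $q$-weights and these have to conspire with the $q^{ux+\binom{r-u}{2}}$ weights and the overall $q^{p-r+1}$ to produce the desired $q$-weight on the right-hand side. A useful sanity check throughout is the $q\to 1$ limit: under $[k]\to k$, $\qbinom{a}{b}\to\binom{a}{b}$, $q^{\text{anything}}\to 1$, the identity \eqref{eq:q-Crampe} must collapse onto \eqref{eq:Crampe}, so any surviving nontrivial power of $q$ in a place where the classical formula has none flags an error. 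Once the summand identity is in hand, the passage back to the normalized $Q_r$'s and the verification of the two endpoint cases are routine.
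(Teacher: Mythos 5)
There is a genuine gap at the very first step: you have misquoted the definition of the $q$-Hahn polynomial, and the whole plan is built on that misquote. The paper defines
\begin{equation*}
Q_r(N,p;q;x)=\frac{\qbinom{N}{r}-\qbinom{N}{r-1}}{q^{x^2}\qbinom{N-p}{x}\qbinom{p}{x}}\sum_{\ell=0}^{x}(-1)^{x-\ell}q^{\ell r+\binom{x-\ell}{2}}\qbinom{p-\ell}{p-x}\qbinom{p-r}{\ell}\qbinom{N-p+\ell-r}{\ell},
\end{equation*}
i.e.\ $Q_r(N,p;q;x)=\frac{m_r}{k_x}E_x(N,p;q;r)$: the sum has length $x$ and the index $r$ sits \emph{inside} the summand as a parameter. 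You instead assert $Q_r(N,p;q;x)=\frac{m_r}{k_r}E_r(N,p;q;x)$, with the sum of length $r$ and normalizer $q^{r^2}\qbinom{N-p}{r}\qbinom{p}{r}$. That is the formal-self-duality relation, which fails for Grassmann/$q$-Hahn (already at $q\to 1$: for $J(4,2)$ one has $H_1(4,2;2)=3\,E_2(4,2;1)=-3$, whereas $\tfrac{m_1}{k_1}E_1(4,2;2)=-\tfrac32$). Consequently your step-one reduction produces the wrong target identity, and the coefficients you would be trying to verify are not those of \eqref{eq:q-Crampe}.

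This also undermines the mechanism you propose for the core computation. Because all three polynomials in \eqref{eq:q-Crampe}, written correctly, are sums of the \emph{same} length $x$, no re-indexing $u\mapsto u-1$ occurs and no $q$-Pascal recombination of the $x$-dependent binomials is needed. The actual work (as in the paper's proof, and in the classical Lemma~\ref{lem:Hahn}) is to split the $r$-dependent factor of each fixed summand,
\begin{equation*}
D=[p-\ell]\left(\qbinom{N}{r}-\qbinom{N}{r-1}\right)\qbinom{p-r}{\ell}\qbinom{N-p+\ell-r}{\ell},
\end{equation*}
into two pieces via $[p-\ell]=q^{r}[p-\ell-r]+[r]$ (from \eqref{iden1}) together with \eqref{iden5}, each piece reassembling into the $\ell$-th summand of $Q_r(N-1,p-1;q;x)$ or of $Q_{r-1}(N-1,p-1;q;x)$. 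Your boundary-case discussion and the $q\to1$ sanity check are fine, but as written the proposal would not prove the lemma: you need to start from the correct dual expression for $Q_r$ and replace the Pascal/reindexing step by a splitting of the $r$-dependent factor of the summand.
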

Although \eqref{eq:q-Crampe} seems not to hold for some values of $r$,
it can be justified by interpreting it as follows.
For $r=0$, we regard $Q_{-1}(N-1,p-1;q;x)$ as zero,
i.e., \eqref{eq:q-Crampe} is written by
\[
    Q_0(N,p;q;x) = Q_0(N-1,p-1;q;x).
\]
In fact, both $Q_0(N,p;q;x)$ and $Q_0(N-1,p-1;q;x)$ become 1, hence the above equation is correct. 
For $r=p$, we regard $Q_p(N-1,p-1;q;x)$ as zero,
i.e., \eqref{eq:q-Crampe} is written by
\[
    Q_p(N,p;q;x)=q\frac{[N][N-2p+1]}{[p][N-2p+2]} Q_{p-1}(N-1,p-1;q;x).
\]
In fact, by $Q_p(N,p;x)=(-1)^x q^{p-\frac{x^2+x}{2}}\frac{[N-2p+1]}{[N-p+1]} \frac{\qbinom{N}{p}}{\qbinom{N-p}{x}}$, the above equation is justified.

\begin{proof}[Proof of Lemma~\ref{lem:qHahn}]

Using \eqref{iden4} and \eqref{iden3}, we have
\begin{align*}
Q_r(N,p;q;x)
=&
\frac{\qbinom{N}{r}-\qbinom{N}{r-1}}
{q^{x^2}\qbinom{N-p}{x}\qbinom{p}{x}}
\sum^{x}_{\ell=0}(-1)^{x-\ell}q^{\ell r+\binom{x-\ell}{2}}
\qbinom{p-\ell}{p-x}
\qbinom{p-r}{\ell}\qbinom{N-p+\ell-r}{\ell}\\
=&
\frac{1}
{[p]q^{x^2}\qbinom{N-p}{x}\qbinom{p-1}{x}}
\sum^{x}_{\ell=0}(-1)^{x-\ell}q^{\ell r+\binom{x-\ell}{2}}
\qbinom{p-1-\ell }{p-1-x}\\
&\times
[p-\ell]\left(\qbinom{N}{r}-\qbinom{N}{r-1}\right)
\qbinom{p-r}{\ell}\qbinom{N-p+\ell-r}{\ell}.
\end{align*}
Let us focus on the factor
$D=[p-\ell]\left(\qbinom{N}{r}-\qbinom{N}{r-1}\right)
\qbinom{p-r}{\ell}\qbinom{N-p+\ell-r}{\ell}$
in each summand and rewrite it using \eqref{iden5} and \eqref{iden1} 
as follows:
\begin{align}
D
=&
[p-\ell]\left(1-\frac{[r]}{[N-r+1]}\right)\qbinom{N}{r}
\qbinom{p-r}{\ell}\qbinom{N-p+\ell-r}{\ell} \notag\\
=&
\left([p-\ell]-\frac{[p-\ell][r]}{[N-r+1]}\right)\qbinom{N}{r}
\qbinom{p-r}{\ell}\qbinom{N-p+\ell-r}{\ell}\notag\\
=&
\left(q^r [p-\ell-r] + [r]-\frac{[p-\ell][r]}{[N-r+1]}\right)
\qbinom{N}{r}
\qbinom{p-r}{\ell}\qbinom{N-p+\ell-r}{\ell} \notag\\
=&
q^r [p-\ell-r]
\qbinom{N}{r}\qbinom{p-r}{\ell}\qbinom{N-p+\ell-r}{\ell} \label{eq:D1}\\
&+
[r]\left(1-\frac{[p-\ell]}{[N-r+1]}\right)
\qbinom{N}{r}\qbinom{p-r}{\ell}\qbinom{N-p+\ell-r}{\ell}\label{eq:D2}.
\end{align}
Then by \eqref{iden4} and \eqref{iden5}, we have
\begin{align*}
\eqref{eq:D1}
=&
q^r [p-\ell-r]
\frac{[N]}{[N-r]}
\qbinom{N-1}{r}\frac{[p-r]}{[p-r-\ell]}\qbinom{p-r-1}{\ell}\qbinom{N-p+\ell-r}{\ell}\\
=&
\frac{[N][p-r]}{[N-2r]}
\left(\qbinom{N-1}{r}-\qbinom{N-1}{r-1}\right)
\qbinom{p-r-1}{\ell}\qbinom{N-p+\ell-r}{\ell}
\end{align*}
and by \eqref{iden1}, \eqref{iden2}, \eqref{iden3} and \eqref{iden5},
we have
\begin{align*}
\eqref{eq:D2}
=&
[r]q^{p-\ell}\frac{[N-r+1-p+\ell]}{[N-r+1]}
\frac{[N]}{[r]}
\qbinom{N-1}{r-1}\qbinom{p-r}{\ell}
\frac{[N-p-r+1]}{[N-p+\ell-r+1]}\qbinom{N-p+\ell-r+1}{\ell}\\
=&
q^{p-\ell-r+1}[N]\frac{[N-p-r+1]}{[N-2r+2]}
\left(\qbinom{N-1}{r-1}-\qbinom{N-1}{r-2}\right)
\qbinom{p-r}{\ell}
\qbinom{N-p+\ell-r+1}{\ell}.
\end{align*}
Therefore, we have
\begin{align*}
&Q_r(N,p;q;x)\\
=&
\frac{1}
{[p]q^{x^2}\qbinom{N-p}{x}\qbinom{p-1}{x}}
\sum^{x}_{\ell=0}(-1)^{x-\ell}q^{\ell r+\binom{x-\ell}{2}}
\qbinom{p-1-\ell }{p-1-x}\\
&\times
\left(
    \frac{[N][p-r]}{[N-2r]}
    \left(\qbinom{N-1}{r}-\qbinom{N-1}{r-1}\right)
    \qbinom{p-r-1}{\ell}\qbinom{N-p+\ell-r}{\ell}
\right.\\
&+
\left.
    q^{p-\ell-r+1}[N]\frac{[N-p-r+1]}{[N-2r+2]}
    \left(\qbinom{N-1}{r-1}-\qbinom{N-1}{r-2}\right)
    \qbinom{p-r}{\ell}
    \qbinom{N-p+\ell-r+1}{\ell}
\right)\\
=&
\textstyle
\frac{[N]}{[p]}
\left(
\frac{[p-r]}{[N-2r]}
\frac{\qbinom{N-1}{r}-\qbinom{N-1}{r-1}}
{q^{x^2}\qbinom{N-p}{x}\qbinom{p-1}{x}}
\sum^{x}_{\ell=0}(-1)^{x-\ell}q^{\ell r+\binom{x-\ell}{2}}
\qbinom{p-1-\ell }{p-1-x}
\qbinom{p-r-1}{\ell}\qbinom{N-p+\ell-r}{\ell}
\right.\\
&+
\textstyle
\left.
q^{p-r+1}
\frac{[N-p-r+1]}{[N-2r+2]}
\frac{\qbinom{N-1}{r-1}-\qbinom{N-1}{r-2}}
{q^{x^2}\qbinom{N-p}{x}\qbinom{p-1}{x}}
\sum^{x}_{\ell=0}(-1)^{x-\ell}q^{\ell (r-1)+\binom{x-\ell}{2}}
\qbinom{p-1-\ell }{p-1-x}
\qbinom{p-r}{\ell}
\qbinom{N-p+\ell-r+1}{\ell}
\right)\\
=&
\frac{[N]}{[p]} 
\left( 
\frac{[p-r]}{[N-2r]}Q_r(N-1,p-1;q;x)
+q^{p-r+1}\frac{[N-p-r+1]}{[N-2r+2]}Q_{r-1}(N-1,p-1;q;x).
\right)
\end{align*}

\end{proof}

\section{Proof of Theorem~\ref{thm:main2}}
\label{sec:proofThm2}

In order to prove Theorem~\ref{thm:main2},
it is sufficient to prove the following proposition from Proposition~\ref{prop:Q-TFAE}.

\begin{prop}
\label{prop:attenuatedQ}
The Krein numbers of the association scheme obtained from the attenuated space $\mathfrak{X}$ satisfy the following:
\begin{enumerate}[label=$(\roman*)$]
\item $q^{\alphaalt \betaalt}_{10,ij} \neq 0$ implies
\[ 
(\alphaalt,\betaalt) \in \set{(i+1,j),(i+1,j-1),(i,j+1),(i,j),(i,j-1),(i-1,j+1),(i-1,j)}
\cap \mathcal{D},
\]
i.e., $(\alphaalt,\betaalt)\le_{\mathrm{grlex}} (i,j)+(1,0)$.
The exact values of $q^{\alphaalt \betaalt}_{10,ij}$ are given by
\eqref{eq:attenuated_Krein_1},
\eqref{eq:attenuated_Krein_2},
\eqref{eq:attenuated_Krein_3},
\eqref{eq:attenuated_Krein_4},
\eqref{eq:attenuated_Krein_5}
and
\eqref{eq:attenuated_Krein_6}.
Moreover, if $(i+1,j)=(i,j)+(1,0)\in \mathcal{D}$,
then $q^{i+1,j}_{10,ij} \neq 0$ holds.

\item $q^{\alphaalt \betaalt}_{01,ij} \neq 0$ implies
\[ 
(\alphaalt,\betaalt) \in \set{(i,j+1),(i,j),(i,j-1)}
\cap \mathcal{D},
\]
i.e., $(\alphaalt,\betaalt)\le_{\mathrm{grlex}} (i,j)+(0,1)$.
The exact values of $q^{\alphaalt \betaalt}_{01,ij}$ are given by
\eqref{eq:attenuated_Krein_011},
\eqref{eq:attenuated_Krein_012}
and
\eqref{eq:attenuated_Krein_013}.
Moreover, if $(i,j+1)=(i,j)+(0,1)\in \mathcal{D}$,
then $q^{i,j+1}_{01,ij} \neq 0$ holds.
\end{enumerate}
\end{prop}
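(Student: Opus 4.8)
The plan is to follow the computation of Section~\ref{sec:proofThm1} for the nonbinary Johnson scheme essentially verbatim, with the Hamming ingredient $H(k-y,r-1)$ replaced by the bilinear forms scheme $H_q(m-y,l)$ and the Johnson ingredient $J(n-i,k-i)$ replaced by the Grassmann scheme $\mathrm{Gr}_q(n-i,m-i)$. Starting from the Krein formula \eqref{eq:Krein2} and substituting \eqref{eq:attenuated_card}, \eqref{eq:attenuated_k}, \eqref{eq:attenuated_Q} and \eqref{eq:attenuated_m}, the factorization $Q_{ij}(xy)=\frac{\qbinom{n}{m}}{\qbinom{n-i}{m-i}}K_i(m-y,l;q;x)\,Q_j(n-i,m-i;q;y)$ splits both $q^{\alphaalt\betaalt}_{10,ij}$ and $q^{\alphaalt\betaalt}_{01,ij}$ into an inner sum over $x$ built from generalized Krawtchouk polynomials in $H_q(m-y,l)$, times an outer sum over $y$ built from $q$-Hahn polynomials in Grassmann schemes. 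For part~(i) I would collapse the inner sum by applying \eqref{eq:Krein2} to $H_q(m-y,l)$; by \eqref{eq:bilinear_p_q} this forces $\alphaalt\in\{i-1,i,i+1\}$ and leaves the weight $q^\alphaalt_{1i}(H_q(m-y,l))$ inside the $y$-sum. For part~(ii) I would instead collapse the inner sum by the orthogonality relation \eqref{eq:Krein3} for $H_q(m-y,l)$, which forces $\alphaalt=i$ and leaves $Q_1(n,m;q;y)$ inside the $y$-sum.

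I would then perform the outer $y$-sum. When $\alphaalt=i+1$ (resp.\ $\alphaalt=i-1$) the two Grassmann levels $n-i$ and $n-i-1$ (resp.\ $n-i+1$) do not match, so I would bridge them with the $q$-Hahn recurrence of Lemma~\ref{lem:qHahn}, rewriting $Q_j(n-i,m-i;q;y)$ (resp.\ $Q_\betaalt(n-i+1,m-i+1;q;y)$) as a two-term combination at the common level, after which \eqref{eq:Krein3} for the relevant Grassmann scheme collapses the sum and pins $\betaalt$ to $\{j-1,j\}$ (resp.\ $\{j,j+1\}$). The $y$-dependent factor $[m-y-i+1]$ appearing in \eqref{eq:bilinear_p_q} when $\alphaalt=i-1$ is harmless, since it cancels against the $q$-binomial identity $\qbinom{m-y}{i-1}=\frac{[i]}{[m-y-i+1]}\qbinom{m-y}{i}$, exactly as $(k-y-i+1)$ does in the derivation of \eqref{eq:nonbinary_Krein_3}. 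The genuinely new case is $\alphaalt=i$: here the coefficient $[i](q^{m-y}+q^{l}-q^{i}-q^{i-1}-1)$ of $H_q(m-y,l)$ carries a true $y$-dependence through $q^{m-y}$. I would absorb this by writing $q^{m-y}=q^{m}\bigl(1+(q-1)[-y]\bigr)$ and then using \eqref{eq:q^-x} with parameters $(n-i,m-i)$ to turn $[-y]$ into a linear combination of $1$ and $Q_1(n-i,m-i;q;y)$; the $1$-part contributes $\delta_{j\betaalt}$ via \eqref{eq:Krein3}, while the $Q_1$-part contributes $q^\betaalt_{1j}(\mathrm{Gr}_q(n-i,m-i))$ via \eqref{eq:Krein2}, so $\betaalt\in\{j-1,j,j+1\}$. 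Taking the union over the three values of $\alphaalt$ yields exactly the seven pairs listed in part~(i); the two extra pairs $(i,j+1)$ and $(i,j-1)$, not present in the nonbinary Johnson case, come precisely from the $q^{m-y}$ term. Part~(ii) runs in complete parallel: I would first obtain a $q$-analogue of \eqref{eq:theta-ast} by eliminating $[-y]$ between the two instances of \eqref{eq:q^-x} for $(n,m)$ and $(n-i,m-i)$, thereby writing $Q_1(n,m;q;y)$ as a linear combination of $1$ and $Q_1(n-i,m-i;q;y)$, and then collapse the $y$-sum using \eqref{eq:Krein3} and \eqref{eq:Krein2} for $\mathrm{Gr}_q(n-i,m-i)$, producing the three pairs of part~(ii).

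Carrying all these collapses through with the explicit valencies \eqref{eq:bilinear_k_m} and \eqref{eq:Grassmann_k}, the multiplicities \eqref{eq:Grassmann_m}, and the $q$-identities \eqref{iden1}--\eqref{iden5}, I would read off the closed forms \eqref{eq:attenuated_Krein_1}--\eqref{eq:attenuated_Krein_6} and \eqref{eq:attenuated_Krein_011}--\eqref{eq:attenuated_Krein_013}. The \textbf{moreover} assertions then follow by inspection of these formulas: if $(i+1,j)\in\mathcal{D}$ then $i+1\le m\wedge(n-m)$ and $j\le(m-i-1)\wedge l$, so every $q$-number occurring as a factor of $q^{i+1,j}_{10,ij}$ has the shape $[k]$ with $k\ge 1$ and is hence positive, whence $q^{i+1,j}_{10,ij}\neq 0$; the same argument applied to the formula for $q^{i,j+1}_{01,ij}$ gives $q^{i,j+1}_{01,ij}\neq 0$ whenever $(i,j+1)\in\mathcal{D}$. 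I expect the main obstacle to be bookkeeping rather than conceptual: keeping the numerous $q$-binomial rearrangements under control, and in particular organising the $q^{m-y}$ term in the case $\alphaalt=i$ cleanly enough that the $q$-Hahn recurrence and the orthogonality relations still apply --- this is the single place where the attenuated computation genuinely departs from its nonbinary Johnson prototype.
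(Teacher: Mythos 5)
Your proposal follows the paper's proof essentially step for step: the same substitution into \eqref{eq:Krein2}, the same collapse of the inner sum via \eqref{eq:Krein2} (resp.\ \eqref{eq:Krein3}) for $H_q(m-y,l)$ forcing $\alphaalt\in\{i-1,i,i+1\}$ (resp.\ $\alphaalt=i$), the same use of Lemma~\ref{lem:qHahn} to bridge Grassmann levels, the same treatment of the $q^{m-y}$ term via $q^{m-y}=q^m(1+(q-1)[-y])$ and \eqref{eq:q^-x} (which is exactly how the extra pairs $(i,j\pm1)$ arise in the paper), and the same $q$-analogue of \eqref{eq:theta-ast} for part~(ii). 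The approach is correct and identical to the paper's.
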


\subsection{Krein numbers $q^{\alphaalt \betaalt}_{10,ij}$}
To prove (i) of Proposition~\ref{prop:attenuatedQ},
we compute the Krein numbers $q^{\alphaalt \betaalt}_{10,ij}$
of $\mathfrak{X}$
using the formula \eqref{eq:Krein2}.
Note that $\mathfrak{X}$ is symmetric, so there is no need to consider complex conjugates.
By \eqref{eq:attenuated_card},
\eqref{eq:attenuated_k},
\eqref{eq:attenuated_Q} and
\eqref{eq:attenuated_m},
we have
\begin{align*}
q^{\alphaalt \betaalt}_{10,ij}
=&
\frac{1}{|X| m_{\alphaalt \betaalt}}
\sum_{y=0}^{m \wedge (n-m)}\sum_{x=0}^{(m-y)\wedge l}
k_{xy} Q_{10}(xy)Q_{ij}(xy)Q_{\alphaalt \betaalt}(xy)\\
=&
\frac{[n]}{
[m]\qbinom{n-i}{m-i}
m^{H_q(m,l)}_\alphaalt m^{\mathrm{Gr}_q(n-\alphaalt,m-\alphaalt)}_\betaalt}
\sum_{y=0}^{m \wedge (n-m)}
k_y^{\mathrm{Gr}_q(n,m)}
Q_{j}(n-i,m-i;q;y)
Q_{\betaalt}(n-\alphaalt,m-\alphaalt;q;y)\\
&\times
\frac{1}{q^{(m-y) l}}
\sum_{x=0}^{(m-y)\wedge l}
k_x^{H_q(m-y,l)}
K_1(m-y,l;q;x)K_i(m-y,l;q;x)K_\alphaalt(m-y,l;q;x).
\end{align*}
Applying \eqref{eq:Krein2} to the bilinear forms scheme $H_q(k-y,l)$,
we have
\begin{align*}
q^{\alphaalt \betaalt}_{10,ij}
=&
\frac{[n]}{
[m]\qbinom{n-i}{m-i}
m^{H_q(m,l)}_\alphaalt m^{\mathrm{Gr}_q(n-\alphaalt,m-\alphaalt)}_\betaalt}
\sum_{y=0}^{m \wedge (n-m)}
k_y^{\mathrm{Gr}_q(n,m)}
Q_{j}(n-i,m-i;q;y)
Q_{\betaalt}(n-\alphaalt,m-\alphaalt;q;y)\\
&\times
m^{H_q(m-y,l)}_\alphaalt q_{1i}^\alphaalt (H_q(m-y,l)).
\end{align*}
Since one can check
$  m^{H_q(m-y,l)}_\alphaalt
=
m^{H_q(m,l)}_\alphaalt \qbinom{m-y}{\alphaalt}/\qbinom{m}{\alphaalt}$,
we have
\begin{align}
q^{\alphaalt \betaalt}_{10,ij}
=&
\frac{[n]}{
[m]\qbinom{n-i}{m-i}
\qbinom{m}{\alphaalt} m^{\mathrm{Gr}_q(n-\alphaalt,m-\alphaalt)}_\betaalt}
\sum_{y=0}^{m \wedge (n-m)}
k_y^{\mathrm{Gr}_q(n,m)}
Q_{j}(n-i,m-i;q;y)
Q_{\betaalt}(n-\alphaalt,m-\alphaalt;q;y)\notag\\
&\times 
\qbinom{m-y}{\alphaalt} q_{1i}^\alphaalt (H_q(m-y,l)).
\label{eq:attenuated_q^a_1i}
\end{align}    
According to \eqref{eq:bilinear_p_q},
$\alphaalt$ must be $i-1,i$ or $i+1$
In the below argument, we will use
\begin{equation}
    \label{eq:qbinom_rel}
    \qbinom{m}{y}\qbinom{m-y}{i}=\qbinom{m}{i}\qbinom{m-i}{y}.
\end{equation}

When the case $\alphaalt=i+1$ in \eqref{eq:attenuated_q^a_1i},
by \eqref{eq:Grassmann_k}, \eqref{eq:bilinear_p_q} and \eqref{eq:qbinom_rel}
we have
\begin{align*}
q^{i+1, \betaalt}_{10,ij}
=&
\frac{q^i [i+1][n]}{
[m]\qbinom{n-i}{m-i} m^{\mathrm{Gr}_q(n-i-1,m-i-1)}_\betaalt}\\
&\times
\sum_{y=0}^{m \wedge (n-m)}
k_y^{\mathrm{Gr}_q(n-i-1,m-i-1)}
Q_{j}(n-i,m-i;q;y)
Q_{\betaalt}(n-i-1,m-i-1;q;y).
\end{align*}
Since $Q_{\betaalt}(n-i-1,m-i-1;q;y)=0$ if $y>(m-i-1)\wedge ((n-i-1)-(m-i-1))$,
the range of $y$ in the above summation will be $0\le y \le (m-i-1)\wedge (n-m)$.
From \eqref{eq:q-Crampe},
\begin{align*}
    &Q_j(n-i,m-i;q;y)=\frac{[n-i]}{[m-i]}\\
    &\textstyle \times\left( 
    \frac{[m-i-j]}{[n-i-2j]}Q_j(n-i-1,m-i-1;q;y)
    +q^{m-i-j+1}\frac{[n-m-j+1]}{[n-i-2j+2]}Q_{j-1}(n-i-1,m-i-1;q;y)
    \right)
\end{align*}
holds.
Then the orthogonal relation \eqref{eq:Krein3}
for $\mathrm{Gr}_q(n-i-1,m-i-1)$ implies
\begin{align*}
q^{i+1, \betaalt}_{10,ij}
=&
\frac{q^i [i+1][n][n-i]}{
[m][m-i]\qbinom{n-i}{m-i} m^{\mathrm{Gr}_q(n-i-1,m-i-1)}_\betaalt}
|\mathrm{Gr}_q(n-i-1,m-i-1)|m^{\mathrm{Gr}_q(n-i-1,m-i-1)}_\betaalt\\
&\times
\left( \frac{[m-i-j]}{[n-i-2j]}\delta_{\betaalt ,j}+q^{m-i-j+1}\frac{[n-m-j+1]}{[n-i-2j+2]}\delta_{\betaalt ,j-1}\right)\\
=&
\frac{q^i [i+1][n]}{[m]}
\left( \frac{[m-i-j]}{[n-i-2j]}\delta_{\betaalt ,j}+q^{m-i-j+1}\frac{[n-m-j+1]}{[n-i-2j+2]}\delta_{\betaalt ,j-1}\right).
\end{align*}
In the last line, we used
\[
|\mathrm{Gr}_q(n-i-1,m-i-1)|=
\qbinom{n-i-1}{m-i-1}=
\qbinom{n-i}{m-i}\frac{[m-i]}{[n-i]}.
\]
Thus, $\betaalt$ must be $j$ or $j-1$, and we have
\begin{equation}
\label{eq:attenuated_Krein_1}
q^{i+1,j}_{10,ij} = 
q^i
\frac{[i+1][n][m-i-j]}{[m][n-i-2j]}
\quad
\text{and}
\quad
q^{i+1,j-1}_{10,ij} = 
q^{m-j+1}
\frac{[i+1][n][n-m-j+1]}{[m][n-i-2j+2]}.
\end{equation}

When the case $\alphaalt=i-1$ in \eqref{eq:attenuated_q^a_1i},
by 
\[
\qbinom{m}{i-1}=\frac{[i]}{[m-i+1]}\qbinom{m}{i}  
\quad  \text{and}\quad  
\qbinom{m-y}{i-1}= \frac{[i]}{[m-y-i+1]}\qbinom{m-y}{i},
\]
\eqref{eq:qbinom_rel} and \eqref{eq:Grassmann_k},
we have
\begin{align*}
q^{i-1, \betaalt}_{10,ij}
=&
q^{2i-2}(q-1)
\frac{[l-i+1][n][m-i+1]}{
[m]\qbinom{n-i}{m-i}
m^{\mathrm{Gr}_q(n-i+1,m-i+1)}_\betaalt}\\
&\times
\sum_{y=0}^{m \wedge (n-m)}
k^{\mathrm{Gr}_q(n-i,m-i)}_y
Q_{j}(n-i,m-i;q;y)
Q_{\betaalt}(n-i+1,m-i+1;q;y).
\end{align*}
Since $Q_{\betaalt}(n-i,m-i;q;y)=0$ if $y>(m-i)\wedge ((n-i)-(m-i))$,
the range of $y$ in the above summation will be $0\le y \le (m-i)\wedge (n-m)$.
From Lemma~\ref{lem:qHahn},
\begin{align*}
&Q_\betaalt(n-i+1,m-i+1;q;y)
\\
&=\frac{[n-i+1]}{[m-i+1]} 
\left( 
\frac{[m-i+1-\betaalt]}{[n-i+1-2\betaalt]}Q_\betaalt(n-i,m-i;q;y)
+q^{m-i-\betaalt+2}\frac{[n-m-\betaalt+1]}{[n-i-2\betaalt+3]}Q_{\betaalt-1}(n-i,m-i;q;y)
\right).    
\end{align*}
holds.
The orthogonal relation
\eqref{eq:Krein3} for $\mathrm{Gr}_q(n-i,m-i)$ implies
\begin{align*}
q^{i-1, \betaalt}_{10,ij}
=&
q^{2i-2}(q-1)
\frac{[l-i+1][n][n-i+1] m^{\mathrm{Gr}_q(n-i,m-i)}_j}{
[m] m^{\mathrm{Gr}_q(n-i+1,m-i+1)}_\betaalt}\\
&\times
\left( 
\frac{[m-i+1-\betaalt]}{[n-i+1-2\betaalt]}\delta_{j,\betaalt}
+q^{m-i-\betaalt+2}\frac{[n-m-\betaalt+1]}{[n-i-2\betaalt+3]}\delta_{j,\betaalt-1}
\right).
\end{align*}
Thus, $\betaalt$ must be $j$ or $j+1$.
Assume $\betaalt=j$.
Then the Krein number becomes
\[
q^{i-1, j}_{10,ij}
=
q^{2i-2}(q-1)
\frac{[l-i+1][n][n-i+1] m^{\mathrm{Gr}_q(n-i,m-i)}_j}{
[m] m^{\mathrm{Gr}_q(n-i+1,m-i+1)}_j}
\frac{[m-i+1-j]}{[n-i+1-2j]}.
\]
By \eqref{eq:Grassmann_m} and \eqref{iden5},
the above multiplicities can be calculated as
\[
m^{\mathrm{Gr}_q(n-i+1,m-i+1)}_j
=
q^j\frac{[n-i-2j+2][n-i+1]}{[n-i-j+2][n-i-j+1]}\qbinom{n-i}{j}
\]
and
\[
m^{\mathrm{Gr}_q(n-i,m-i)}_j
=
q^j\frac{[n-i-2j+1]}{[n-i-j+1]}\qbinom{n-i}{j}.
\]
Thus, we have
\begin{equation}
q^{i-1, j}_{10,ij}
=
q^{2i-2}(q-1)
\frac{[l-i+1][n][m-i-j+1][n-i-j+2]}{
[m][n-i-2j+2]}.
\label{eq:attenuated_Krein_2}    
\end{equation}
Assume $\betaalt=j+1$.
Then the Krein number becomes
\[
q^{i-1, j+1}_{10,ij}
=
q^{m+i-j-1}(q-1)
\frac{[l-i+1][n][n-i+1] m^{\mathrm{Gr}_q(n-i,m-i)}_j}{
[m] m^{\mathrm{Gr}_q(n-i+1,m-i+1)}_{j+1}}
\frac{[n-m-j]}{[n-i-2j+1]}.
\]
By \eqref{eq:Grassmann_m} and \eqref{iden5},
the multiplicity $m^{\mathrm{Gr}_q(n-i+1,m-i+1)}_{j+1}$ can be calculated as
\[
    m^{\mathrm{Gr}_q(n-i+1,m-i+1)}_{j+1}
    =
    q^{j+1}\frac{[n-i-2j][n-i+1]}{[n-i-j+1][j+1]}\qbinom{n-i}{j}.
\]
Thus, we have
\begin{equation}
q^{i-1, j+1}_{10,ij}
=
q^{m+i-j-2}(q-1)
\frac{[l-i+1][n] [j+1][n-m-j]}
{[m][n-i-2j]}.
\label{eq:attenuated_Krein_3}
\end{equation}

When the case $\alphaalt=i$ in \eqref{eq:attenuated_q^a_1i},
by \eqref{eq:bilinear_p_q}, \eqref{eq:Grassmann_k}, \eqref{eq:q^-x} and \eqref{eq:qbinom_rel}
we have
\begin{align*}
    q^{i \betaalt}_{10,ij}
=&
\frac{[n][i]}{
[m]\qbinom{n-i}{m-i}
m^{\mathrm{Gr}_q(n-i,m-i)}_\betaalt}
\sum_{y=0}^{m \wedge (n-m)}
k_y^{\mathrm{Gr}_q(n-i,m-i)}
Q_{j}(n-i,m-i;q;y)
Q_{\betaalt}(n-i,m-i;q;y)
\\
&\times 
(q^{m-y}+q^l-q^i-q^{i-1}-1)\\
=&
\frac{[n][i]}{
[m]\qbinom{n-i}{m-i}
m^{\mathrm{Gr}_q(n-i,m-i)}_\betaalt}
\sum_{y=0}^{m \wedge (n-m)}
k_y^{\mathrm{Gr}_q(n-i,m-i)}
Q_{j}(n-i,m-i;q;y)
Q_{\betaalt}(n-i,m-i;q;y)
\\
&\times 
\left(q^{m}(q-1)
\left(
\frac{Q_{1}(n-i,m-i;q;y)}{h^\ast (n-i,m-i)}-\frac{[n-m][m-i]}{[n-i]}
\right)
+q^m+q^l-q^i-q^{i-1}-1\right).
\end{align*}
Since $Q_{j}(n-i,m-i;q;y)=0$ if $y>(m-i)\wedge ((n-i)-(m-i))$,
the range of $y$ in the above summation will be $0\le y \le (m-i)\wedge (n-m)$.
The identities \eqref{eq:Krein2} and \eqref{eq:Krein3} for $\mathrm{Gr}_q(n-i,m-i)$ imply
\begin{align*}
q^{i \betaalt}_{10,ij}
=&
\frac{[n][i]}{[m]} 
\left(
\frac{q^{m}(q-1)}{h^\ast (n-i,m-i)}q^\betaalt_{1j}(\mathrm{Gr}_q(n-i,m-i)) \right.\\
&
\left. 
+(-q^{m}(q-1)\frac{[n-m][m-i]}{[n-i]}+q^m+q^l-q^i-q^{i-1}-1)\delta_{j\betaalt}
\right).  
\end{align*}
By \eqref{eq:Grassmann_q},
$\betaalt$ must be $j-1$, $j$ or $j+1$.
When $\betaalt=j-1$, we have
\begin{align}
    q^{i,j-1}_{10,ij}
    =&
    \frac{[n][i]}{[m]} 
    \frac{q^{m}(q-1)}{h^\ast (n-i,m-i)}
    h^\ast(n-i,m-i)B^\ast_{j-1}(n-i,m-i) \notag\\
    =&
    q^{m-j+1}(q-1)
    \frac{[n][i][m-i-j+1][n-m-j+1][n-i-j+2]}{[m][n-i-2j+2][n-i-2j+3]}.
    \label{eq:attenuated_Krein_4}
\end{align}
When $\betaalt=j+1$, we have
\begin{align}
    q^{i,j+1}_{10,ij}
    =&
    \frac{[n][i]}{[m]} 
    \frac{q^{m}(q-1)}{h^\ast (n-i,m-i)}
    h^\ast(n-i,m-i) C^\ast_{j+1}(n-i,m-i) \notag\\
    =&
    q^{m-j-1}(q-1)
    \frac{[n][i][j+1][n-m-j][m-i-j]}{[m][n-i-2j][n-i-2j-1]}.
    \label{eq:attenuated_Krein_5}
\end{align}
When $\betaalt=j$, we have
\begin{align}
q^{ij}_{10,ij}
=&
\frac{[n][i]}{[m]} 
\left(
q^m(q-1) A^\ast_{j}(n-i,m-i)
-q^{m}(q-1)\frac{[n-m][m-i]}{[n-i]}+q^m+q^l-q^i-q^{i-1}-1
\right) \notag\\
=&
\frac{[n][i]}{[m]} 
\left(
-q^m(q-1) (B^\ast_{j}(n-i,m-i)+ C^\ast_{j}(n-i,m-i))
+q^m+q^l-q^i-q^{i-1}-1
\right) \notag\\
=&
-q^{m-j}(q-1)\frac{[n][i]}{[m][n-i-2j+1]} \notag\\
&\times
\left(\frac{[m-i-j][n-i-j+1][n-m-j]}{[n-i-2j]}+ \frac{[j][n-m-j+1][m-i-j+1]}{[n-i-2j+2]}\right) \notag\\
&+(q^m+q^l-q^i-q^{i-1}-1)\frac{[n][i]}{[m]}.
\label{eq:attenuated_Krein_6}
\end{align}

\subsection{Krein numbers $q^{\alphaalt \betaalt}_{01,ij}$}
To prove (ii) of Proposition~\ref{prop:attenuatedQ},
we compute the Krein numbers $q^{\alphaalt \betaalt}_{01,ij}$ of $\mathfrak{X}$
using formula \eqref{eq:Krein2}.
By 
\eqref{eq:attenuated_card},
\eqref{eq:attenuated_k},
\eqref{eq:attenuated_Q} and
\eqref{eq:attenuated_m},
we have
\begin{align*}
q^{\alphaalt \betaalt}_{01,ij}
=&
\frac{1}{|X| m_{\alphaalt \betaalt}}
\sum_{y=0}^{m \wedge (n-m)}\sum_{x=0}^{(m-y)\wedge l}
k_{xy} Q_{01}(xy)Q_{ij}(xy)Q_{\alphaalt \betaalt}(xy)\\
=&\frac{1}{
q^{ml}\qbinom{n-i}{m-i}
 m^{H_q(m,l)}_\alphaalt m^{\mathrm{Gr}_q(n-\alphaalt,m-\alphaalt)}_\betaalt}
\sum_{y=0}^{m \wedge (n-m)}
q^{y l}k_y^{\mathrm{Gr}_q(n,m)}Q_{1}(n,m;q;y)Q_{j}(n-i,m-i;q;y)\\
&\times 
Q_{\betaalt}(n-\alphaalt,m-\alphaalt;q;y)
\sum_{x=0}^{(m-y)\wedge l}
k_x^{H_q(m-y,l)}K_i(m-y,l;q;x)K_\alphaalt(m-y,l;q;x).
\end{align*}
Applying \eqref{eq:Krein3} to $H_q(m-y,l)$,
we have
\[
\sum_{x=0}^{(m-y)\wedge l}
k_x^{H_q(m-y,l)}K_i(m-y,l;q;x)K_\alphaalt(m-y,l;q;x)
=
q^{(m-y)l}m^{H_q(m-y,l)}_\alphaalt \delta_{i,\alphaalt}.  
\]
Hence, $\alphaalt$ must be $i$ and 
by \eqref{eq:Grassmann_k}, \eqref{eq:bilinear_k_m} and \eqref{eq:qbinom_rel},
we have
\begin{align}
q^{i \betaalt}_{01,ij}
=&\frac{1}{
\qbinom{n-i}{m-i}
m^{\mathrm{Gr}_q(n-i,m-i)}_\betaalt} \notag\\
&\times \sum_{y=0}^{m \wedge (n-m)}
k_y^{\mathrm{Gr}_q(n-i,m-i)}
Q_{1}(n,m;q;y)Q_{j}(n-i,m-i;q;y)
Q_{\betaalt}(n-i,m-i;q;y) \label{eq:Q-up1}
\end{align}
Since $Q_{j}(n-i,m-i;q;y)=0$ if $y>(m-i)\wedge ((n-i)-(m-i))$,
the range of $y$ in the above summation will be $0\le y \le (m-i)\wedge (n-m)$.
To carry out the summation, we replace $Q_{1}(n,m;q;y)$ with another expression.
Similar way to \eqref{eq:theta-ast}, we have
In order to compute the sum,
we use the following calculation:
\begin{align}
    Q_{1}(n,m;q;x) 
    =&
    h^\ast(n,m) \left(\frac{Q_{1}(n-i,m-i;q;x)}{h^\ast(n-i,m-i)}+\frac{[n-m][m]}{[n]}-\frac{[n-m][m-i]}{[n-i]}
    \right). \label{eq:Q-down}
\end{align}
Thus 
\[
q^{i \betaalt}_{01,ij}
=
\frac{h^\ast(n,m)}{h^\ast(n-i,m-i)}q^\betaalt_{1j}(\mathrm{Gr}_q(n-i,m-i))
+h^\ast(n,m)\left(
\frac{[n-m][m]}{[n]}-\frac{[n-m][m-i]}{[n-i]}
\right)\delta_{j,\betaalt}
\]
holds.
By \eqref{eq:Grassmann_q},
$\betaalt$ must be $j-1$, $j$ or $j+1$.
When $\betaalt=j-1$, we have
\begin{align}
    q^{i,j-1}_{01,ij}
    =&
    \frac{h^\ast(n,m)}{h^\ast(n-i,m-i)}
    h^\ast(n-i,m-i)B^\ast_{j-1}(n-i,m-i) \notag\\
    =&
    \frac{[n][n-1][m-i-j+1][n-m-j+1][n-i-j+2]}{q^{j-2}[n-m][m][n-i-2j+2][n-i-2j+3]}.
    \label{eq:attenuated_Krein_011}
\end{align}
When $\betaalt=j+1$, we have
\begin{align}
    q^{i,j+1}_{01,ij}
    =&
    \frac{h^\ast(n,m)}{h^\ast(n-i,m-i)}
    h^\ast(n-i,m-i)C^\ast_{j+1}(n-i,m-i) \notag\\
    =&
    \frac{[n][n-1][j+1][n-m-j][m-i-j]}{q^{j}[n-m][m][n-i-2j][n-i-2j-1]}.
    \label{eq:attenuated_Krein_012}
\end{align}
When $\betaalt=j$, we have
\begin{align}
q^{i,j}_{01,ij}
=&
\frac{h^\ast(n,m)}{h^\ast(n-i,m-i)}
h^\ast(n-i,m-i)A^\ast_{j}(n-i,m-i) \notag\\
&+h^\ast(n,m)\left(
\frac{[n-m][m]}{[n]}-\frac{[n-m][m-i]}{[n-i]}
\right)
\notag\\
=&
h^\ast(n,m)
\left(
    \frac{[n-m][m]}{[n]}-B^\ast_{j}(n-i,m-i)-C^\ast_{j}(n-i,m-i)
\right)\notag\\
=&
\textstyle{
q [n-1]
-
\frac{[n][n-1]}{q^{j-1}[n-m][m][n-i-2j+1]}
\left(
    \frac{[m-i-j][n-i-j+1][n-m-j]}{[n-i-2j]}
    +
    \frac{[j][n-m-j+1][m-i-j+1]}{[n-i-2j+2]}
\right).
}
\label{eq:attenuated_Krein_013}
\end{align}

\section{$A_M$-Leonard pairs}
\label{sec:multiLeonard}

Iliev-Terwilliger~\cite{IT2012} consider some multivariate $P$-polynomial 
(and/or $Q$-polynomial) association schemes from the viewpoint 
of root systems, in particular of type $A_n$ and possibly for 
other types. These are very special classes of more general multivariate 
$P$-polynomial (and/or $Q$-polynomial) association schemes 
we have considered. 
Recently, Cramp\'{e}-Zaimi~\cite{CrampeZaimi2023}
gave further progress with respect to the above theory of \cite{IT2012}.
Here we give the definition of the generalized Lenard pair, called the $A_M$-Leonard pair, described in \cite{CrampeZaimi2023}.
Let $\mathbb{F}$ denote a field and let $V$ denote a vector space over $\mathbb{F}$ with finite positive dimension.
Let $\End(V)$ denote the $\mathbb{F}$-algebra consisting of the $\mathbb{F}$-linear maps from $V$ to $V$.
Fix integers $M,N \ge 1$.
Let 
\[\mathcal{D}:=\{\alpha \in \N^M \mid |\alpha| \le N\}.\]
A pair of elements $(r_1,\ldots ,r_M )$ and $(r'_1,\ldots ,r'_M )$ in $\mathcal{D}$ will be called \emph{adjacent}
whenever $(r_1 - r'_1, \ldots ,r_M - r'_M )$ is a permutation of an element in
\[\{(0,0,\ldots ,0),(1, -1, 0, 0,\ldots ,0),(1, 0, 0,\ldots ,0),(-1, 0, 0,\ldots ,0)\}.\]

\begin{dfe}
\label{df:AMLeonard}
The pair $(H,\tilde{H})$ is an \emph{$A_M$-Leonard pair} on the domain $\mathcal{D}$ if the following statements are satisfied: 
\begin{enumerate}[label=(\roman*)]
\item $H$ is an $M$-dimensional subspace of $\End(V)$ whose elements are diagonalizable and mutually commute.
\item $\tilde{H}$ is an $M$-dimensional subspace of $\End(V)$ whose elements are diagonalizable and mutually commute.
\item There exists a bijection $\alpha \mapsto V_\alpha $ from $\mathcal{D}$ to the set of common eigenspaces of $H$ such that for all $\alpha\in \mathcal{D}$,
\[\tilde{H}V_\alpha \subset \sum_{\substack{\beta \in \mathcal{D}\\ \text{$\beta$ adj $\alpha$}}} V_\beta .\]
\item There exists a bijection $\alpha \mapsto \tilde{V}_\alpha $ from $\mathcal{D}$ to the set of common eigenspaces of $\tilde{H}$ such that for all $\alpha\in \mathcal{D}$,
\[H\tilde{V}_\alpha \subset \sum_{\substack{\beta \in \mathcal{D}\\ \text{$\beta$ adj $\alpha$}}} \tilde{V}_\beta .\]
\item There does not exist a subspace $W$ of $V$ such that $H W \subset W$,
$\tilde{H} W \subset W$, $W\neq 0$, $W\neq V$.
\item Each of $V_\alpha,\tilde{V}_\alpha$ has dimension 1 for $\alpha \in \mathcal{D}$.
\item There exists a nondegenerate symmetric bilinear form $\langle , \rangle$ on $V$ such that both
\begin{align*}
&\langle V_\alpha ,V_\beta \rangle =0 \quad \text{if $\alpha \neq \beta$ and $\alpha, \beta \in \mathcal{D}$},\\
&\langle \tilde{V}_\alpha ,\tilde{V}_\beta \rangle =0 \quad \text{if $\alpha \neq \beta$ and $\alpha, \beta \in \mathcal{D}$}.
\end{align*}
\end{enumerate}
\end{dfe}

Among multivariate polynomial association schemes, 
we define a class related to the $A_M$ Leonard pairs 
and show that it has the structure of $A_M$-Leonard pairs as follows.
\begin{dfe}
\label{df:AMmultivariate}
A symmetric association scheme $\mathfrak{X}$ is called an \emph{$A_M$ multivariate $P$- and $Q$-polynomial association scheme} 
on $\mathcal{D}=\{\alpha \in \N^M \mid |\alpha| \le N\}$
if the following conditions are satisfied:
\begin{enumerate}[label=(\roman*)]
\item $\mathfrak{X}$ is an $M$-variate $P$-polynomial association scheme on $\mathcal{D}$ for some monomial order $\le _1$ such that 
for $\alpha \in \mathcal{D}$ and $i=1,2,\ldots ,M$, if $p_{\epsilon_i,\alpha}^{\beta} \neq 0$ then $\beta$ is adjacent to $\alpha$;
\item $\mathfrak{X}$ is an $M$-variate $Q$-polynomial association scheme on $\mathcal{D}$ for some monomial order $\le _2$ such that 
for $\alpha \in \mathcal{D}$ and $i=1,2,\ldots ,M$, if $q_{\epsilon_i,\alpha}^{\beta} \neq 0$ then $\beta$ is adjacent to $\alpha$.
\end{enumerate}
\end{dfe}

By considering the principal module of the Terwilliger algebra of an association scheme, 
it can be shown that an $A_M$-multivariate $P$- and $Q$-polynomial association scheme has the structure of $A_M$-Leonard pairs
as Theorem~\ref{thm:AMLeonard}. 
We will now briefly review the principal module of the Terwilliger algebra. 
For details, please refer to Terwilliger~\cite{Terwilligeralg1,Terwilligeralg2,Terwilligeralg3}.

Let $\mathfrak{X} = (X, \{R_i\}_{i \in \mathcal{I}})$ be a general commutative association scheme. 
The complex vector space with the set $X$ of points is represented by $W = \C^{X}$. 
Suppose $x_0 \in X$ is a point of $\mathfrak{X}$ and for each $i \in \mathcal{I}$, $\Gamma_i(x_0) = \{x \mid (x_0,x) \in R_i\}$. 
The subspace of $W$ spanned by $\Gamma_i(x_0)$ is denoted as $W_i^*$.
Let $E_i^*$ be the projection matrix from $W$ to $W_i^*$.
In other words, $E_i^*$ is the diagonal matrix given by 
\begin{align}
E_i^*(x,y) = 
\begin{cases}
1, & \text{if } x=y \text{ and } (x_0,x) \in R_i, \\
0, & \text{otherwise.}
\end{cases}
\end{align}
For $i \in \mathcal{J}$, we define 
\begin{align}
A_i^* = \sum_{\alpha \in \mathcal{I}} Q_i(\alpha) E_\alpha^*.
\end{align}
The Bose-Mesner algebra $\mathfrak{A}$ is generated by $\{A_0, A_1, \ldots, A_d\}$ and the dual Bose-Mesner algebra $\mathfrak{A}^*$ with respect to $x_0$ is generated by $\{A_0^*, A_1^*, \ldots, A_d^*\}$. 
The \emph{Terwilliger algebra} $T = T(x_0)$ is generated by $\mathfrak{A}$ and $\mathfrak{A}^*$. 
Let $\bm{1} \in \C^X$ be the all-one vector. 
It is known that the subspace $\mathfrak{A} x_0 = \mathfrak{A}^* \bm{1}$ is $T$-invariant, and thus $\mathfrak{A}x_0$ is called the \emph{principal $T$-module}. 
Let $V = \mathfrak{A}x_0$, $v_i = E_i^* \bm{1} \in V_i^* = E_i^* V$, and $v_i^* = E_i x_0 \in V_i = E_i V$. 
Then one can check the following facts:
\begin{enumerate}[label=$(T\arabic*)$]
\item \label{enu:Ajv*i} $A_j v_i^* = P_j(i) v_i^*$ for $j \in \mathcal{I}$ and $i \in \mathcal{J}$;
\item \label{enu:A*jvi} $A_j^\ast v_i = Q_j(i) v_i$ for $i \in \mathcal{I}$ and $j \in \mathcal{J}$;  
\item \label{enu:dimV=1} $\dim V_i = 1$ for $i \in \mathcal{J}$;
\item \label{enu:dim*V=1} $\dim V_i^* = 1$ for $i \in \mathcal{I}$; 
\item \label{enu:Ajvi} $A_j v_i = \sum_{k \in \mathcal{I}} p_{j,i}^k v_k$;
\item \label{enu:A*jv*i} $A_j^* v_i^* = \sum_{k \in \mathcal{J}} q_{j,i}^k v_k^*$.
\end{enumerate}

\begin{thm}
\label{thm:AMLeonard}
An $A_M$ multivariate $P$- and $Q$-polynomial association scheme on $\mathcal{D}=\{\alpha \in \N^M \mid |\alpha| \le N\}$
has the structure of $A_M$-Leonard pairs on $\mathbb{F}=\C$.
\end{thm}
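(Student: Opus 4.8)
The plan is to realize the $A_M$-Leonard pair inside the Terwilliger algebra $T=T(x_0)$ acting on its principal module $V=\mathfrak{A}x_0=\mathfrak{A}^\ast\mathbf{1}$, using exactly the facts \ref{enu:Ajv*i}--\ref{enu:A*jv*i} recalled before the statement. Concretely, I would put $H:=\Span_\C\{A_{\epsilon_1}|_V,\dots,A_{\epsilon_M}|_V\}$, built from the $M$ generators of the $P$-polynomial structure, and $\tilde H:=\Span_\C\{A^\ast_{\epsilon_1}|_V,\dots,A^\ast_{\epsilon_M}|_V\}$, built from the $M$ dual generators of the $Q$-polynomial structure. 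Since $\mathfrak{A}$ is spanned by the real symmetric matrices $A_i$ and $\mathfrak{A}^\ast$ by the real diagonal matrices $A_i^\ast$, and both algebras are commutative and leave $V$ invariant, every element of $H$ and of $\tilde H$ is diagonalizable on $V$ and the elements of each subspace mutually commute, which gives conditions (i)--(ii) of Definition~\ref{df:AMLeonard} once $\dim H=\dim\tilde H=M$ is checked. The latter holds because the actions of $\mathfrak{A}$ on $\mathfrak{A}x_0$ and of $\mathfrak{A}^\ast$ on $\mathfrak{A}^\ast\mathbf{1}$ are faithful: $Ax_0=0$ forces the $x_0$-column of $A=\sum c_iA_i$ to vanish, hence every $c_i=0$ because each valency $k_i$ is positive, and symmetrically for $\mathfrak{A}^\ast$; thus the linearly independent generators remain linearly independent as operators on $V$.

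Next I would pin down the eigenspace structure. By \ref{enu:dimV=1}--\ref{enu:dim*V=1} we have orthogonal decompositions $V=\bigoplus_{\alpha\in\mathcal{D}}V_\alpha=\bigoplus_{\alpha\in\mathcal{D}}V^\ast_\alpha$ into one-dimensional spaces, where $V_\alpha=E_\alpha V=\C v^\ast_\alpha$ are the common eigenspaces of $\mathfrak{A}$, indexed by $\mathcal{D}$ via the $Q$-polynomial relabeling of $\mathcal{J}$, and $V^\ast_\alpha=E^\ast_\alpha V=\C v_\alpha$ are the common eigenspaces of $\mathfrak{A}^\ast$, indexed by $\mathcal{D}$ via the $P$-polynomial relabeling of $\mathcal{I}$. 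I would first check that these are still the common eigenspaces of the smaller subspaces $H$ and $\tilde H$: if $j\neq j'$ in $\mathcal{J}$ satisfied $P_{\epsilon_t}(j)=P_{\epsilon_t}(j')$ for all $t$, then $P_\alpha(j)=v_\alpha(P_{\epsilon_1}(j),\dots,P_{\epsilon_M}(j))=P_\alpha(j')$ for every $\alpha\in\mathcal{D}$ by the $P$-polynomial property, contradicting invertibility of the first eigenmatrix; the dual argument with the second eigenmatrix works for $\tilde H$. This gives condition (vi) and, with the bijections $\alpha\mapsto V_\alpha$ and $\alpha\mapsto\tilde V_\alpha:=V^\ast_\alpha$, the eigenspace parts of (iii)--(iv). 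The adjacency conditions then follow immediately from Definition~\ref{df:AMmultivariate}: by \ref{enu:A*jv*i}, $\tilde H V_\alpha$ is spanned by the vectors $A^\ast_{\epsilon_t}v^\ast_\alpha=\sum_{\gamma\in\mathcal{D}}q^{\gamma}_{\epsilon_t,\alpha}v^\ast_\gamma$, and $q^{\gamma}_{\epsilon_t,\alpha}\neq0$ implies $\gamma$ adjacent to $\alpha$, so $\tilde H V_\alpha\subset\sum_{\beta\ \mathrm{adj}\ \alpha}V_\beta$; dually, by \ref{enu:Ajvi}, $H\tilde V_\alpha$ is spanned by $A_{\epsilon_t}v_\alpha=\sum_{\gamma}p^{\gamma}_{\epsilon_t,\alpha}v_\gamma$ with $p^{\gamma}_{\epsilon_t,\alpha}\neq0\Rightarrow\gamma\ \mathrm{adj}\ \alpha$, giving $H\tilde V_\alpha\subset\sum_{\beta\ \mathrm{adj}\ \alpha}\tilde V_\beta$.

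It remains to verify irreducibility (v) and the bilinear form (vii). For (v), suppose $0\neq W\subsetneq V$ is invariant under $H$ and $\tilde H$; being invariant under all $E^\ast_\alpha$ it is a sum $W=\bigoplus_{\alpha\in S}V^\ast_\alpha$ for some $\emptyset\neq S\subsetneq\mathcal{D}$. If $\alpha\in S$ and $\alpha+\epsilon_t\in\mathcal{D}$, then $p^{\alpha+\epsilon_t}_{\epsilon_t,\alpha}\neq0$ by Proposition~\ref{prop:P-TFAE}, and since $A_{\epsilon_t}v_\alpha=\sum_\gamma p^{\gamma}_{\epsilon_t,\alpha}v_\gamma\in W$ and the $v_\gamma$ form a basis adapted to the decomposition, we get $\alpha+\epsilon_t\in S$; the identity $k_\alpha p^{\alpha-\epsilon_t}_{\epsilon_t,\alpha}=k_{\alpha-\epsilon_t}p^{\alpha}_{\epsilon_t,\alpha-\epsilon_t}$ lets one move down the grid as well, so $S$ is closed under $\pm\epsilon_t$ inside the connected domain $\mathcal{D}=\{|\alpha|\le N\}$, forcing $S=\mathcal{D}$, a contradiction. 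For (vii), I would take the standard symmetric bilinear form $\langle u,v\rangle=\sum_{x\in X}u_xv_x$ on $\C^X$; since each $E_\alpha$ and each $E^\ast_\alpha$ is a real symmetric idempotent, $\langle V_\alpha,V_\beta\rangle=\langle V^\ast_\alpha,V^\ast_\beta\rangle=0$ whenever $\alpha\neq\beta$, and the form is nondegenerate on $V$ because $\langle v^\ast_\alpha,v^\ast_\alpha\rangle=(E_\alpha)_{x_0x_0}=m_\alpha/|X|>0$ and $\langle v_\alpha,v_\alpha\rangle=k_\alpha>0$. Assembling these checks yields all seven conditions of Definition~\ref{df:AMLeonard}.

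I do not expect any single deep step here; rather, the delicate point is organizational, namely keeping the two different $\mathcal{D}$-labelings straight (the one coming from the $P$-structure is used for the eigenspaces $\tilde V_\alpha$ of $\tilde H$, the one coming from the $Q$-structure for the eigenspaces $V_\alpha$ of $H$), and using in the right places the two ``non-vanishing on the grid'' facts $p^{\alpha+\epsilon_i}_{\epsilon_i,\alpha}\neq0$ and $q^{\alpha+\epsilon_i}_{\epsilon_i,\alpha}\neq0$ from Propositions~\ref{prop:P-TFAE} and \ref{prop:Q-TFAE}, which simultaneously drive the distinctness of eigenvalue tuples and the irreducibility argument.
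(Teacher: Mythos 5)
Your proof is correct and follows essentially the same route as the paper: both realize the pair inside the Terwilliger algebra acting on the principal module, derive (i)--(iv), (vi), (vii) from the facts \ref{enu:Ajv*i}--\ref{enu:A*jv*i}, and prove irreducibility by projecting onto a single one-dimensional eigenspace and walking the grid $\mathcal{D}$ using the non-vanishing of $p^{\alpha+\epsilon_i}_{\epsilon_i,\alpha}$ and $q^{\alpha+\epsilon_i}_{\epsilon_i,\alpha}$. The only (immaterial) difference is that in step (v) you decompose the invariant subspace along the dual eigenspaces $V^\ast_\alpha$ and move with intersection numbers, whereas the paper uses the $V_\alpha$ and Krein numbers; you also spell out some details the paper leaves implicit (faithfulness giving $\dim H=\dim\tilde H=M$, distinctness of the eigenvalue $M$-tuples, and the explicit bilinear form), all of which check out.
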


\begin{proof}

Let $H$ be the subspace spanned by $A_{\epsilon_1},A_{\epsilon_2},\ldots ,A_{\epsilon_M}$, 
and let $\tilde{H}$ be the subspace spanned by $A^\ast_{\epsilon_1},A^\ast_{\epsilon_2},\ldots ,A^\ast_{\epsilon_M}$. 
Then \ref{enu:Ajv*i} and \ref{enu:A*jvi} lead to (i) and (ii) of Definition~\ref{df:AMLeonard}.
Let $\tilde{V}_{\alpha} = V_\alpha^*$. 
By \ref{enu:dimV=1}, \ref{enu:dim*V=1}, \ref{enu:Ajvi} and \ref{enu:A*jv*i}, 
and the definition of $A_M$ multivariate $P$- and $Q$-polynomial, we know that (iii), (iv), and (vi) of Definition~\ref{df:AMLeonard} hold.
Since Definition~\ref{df:AMmultivariate} requires that an $A_M$ multivariate $P$- and $Q$-polynomial association scheme is symmetric, 
we know that (vii) of Definition~\ref{df:AMLeonard} holds as well. 

In the following, we will show that (v) holds. 
Now suppose there exists a subspace $U$ of $V$ such that $HU \subset U$, $\tilde{H} U \subset U$, $U \neq 0$. 
Take $0 \neq v = \sum_{\alpha \in \mathcal{D}} c_\alpha v_\alpha^* \in U$. 
In particular, there exists $\alpha_0 \in \mathcal{D}$ such that $c_{\alpha_0} \neq 0$. 
By the above argument $v_{\alpha_0}^* = \frac{1}{c_{\alpha_0}} E_{\alpha_0} v \in U$. 
Since $\mathfrak{X}$ is an $M$-multivariate $Q$-polynomial association scheme,
for $i=1,2,\ldots ,M$, we have $q_{\epsilon_i,\alpha_0}^{\alpha_0+\epsilon_i} \neq 0$ (resp. $q_{\epsilon_i,\alpha_0}^{\alpha_0-\epsilon_i} \neq 0$) 
whenever $\alpha_0+ \epsilon_i \in \mathcal{D}$ (resp. $\alpha_0- \epsilon_i \in \mathcal{D}$).
By \ref{enu:A*jv*i}, it follows that
\[
    v_{\alpha_0+\epsilon_i}^*  = \frac{1}{q_{\epsilon_i,\alpha_0}^{\alpha_0+\epsilon_i}}E_{\alpha_0 +\epsilon_i} A^\ast_{\epsilon_i} v^\ast_{\alpha_o}\in U
    \  \text{and}\  
    v_{\alpha_0-\epsilon_i}^*  = \frac{1}{q_{\epsilon_i,\alpha_0}^{\alpha_0-\epsilon_i}}E_{\alpha_0 -\epsilon_i} A^\ast_{\epsilon_i} v^\ast_{\alpha_o}\in U.
\]
By induction, we get $v_\alpha^* \in U$ for every $\alpha \in \mathcal{D}$. 
Therefore, $U = V$. 
\end{proof}

For suitable parameters,
we can show that
nonbinary Johnson schemes and association schemes obtained from attenuated space
are $A_2$ bivariate $P$- and $Q$-polynomial association schemes
by \cite{BKZZ} or \cite{bi} and Propositions~\ref{prop:nonbinaryQ} and \ref{prop:attenuatedQ}.
Hence, we have the following corollary.
\begin{cor}
\label{cor:multiLeonard}
If the domain $\mathcal{D}$ of a nonbinary Johnson scheme or an association scheme obtained from attenuated space
becomes an isosceles right triangle,
then the association scheme has the structure of $A_2$-Leonard pairs.
\end{cor}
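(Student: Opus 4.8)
The plan is to obtain Corollary~\ref{cor:multiLeonard} directly from Theorem~\ref{thm:AMLeonard} by showing that, under the isosceles-right-triangle hypothesis, the scheme in question satisfies Definition~\ref{df:AMmultivariate} with $M=2$. First I would record when the triangle condition occurs and what shape it forces. For $J_r(n,k)$ the domain $\mathcal{D}=\{(i,j)\mid i+j\le k,\ 0\le j\le k\wedge(n-k)\}$ is an isosceles right triangle precisely when the bound $j\le k\wedge(n-k)$ is redundant given $i+j\le k$ and $i\ge 0$, i.e.\ when $n\ge 2k$, and then $\mathcal{D}=\{\alpha\in\N^2\mid|\alpha|\le k\}$. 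Likewise, for the attenuated space scheme the domain $\mathcal{D}=\{(i,j)\mid 0\le i\le m\wedge(n-m),\ 0\le j\le(m-i)\wedge l\}$ is an isosceles right triangle precisely when $n\ge 2m$ and $l\ge m$, and then $\mathcal{D}=\{\alpha\in\N^2\mid|\alpha|\le m\}$. In either case $\mathcal{D}$ has exactly the form $\{\alpha\in\N^2\mid|\alpha|\le N\}$ demanded in Definition~\ref{df:AMmultivariate} and Theorem~\ref{thm:AMLeonard}.

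Next I would verify condition (ii) of Definition~\ref{df:AMmultivariate}. By Theorem~\ref{thm:main1} (resp.\ Theorem~\ref{thm:main2}) the scheme is bivariate $Q$-polynomial with respect to $\le_{\mathrm{grlex}}$, so only the adjacency of nonzero Krein numbers remains, and this is read off from Proposition~\ref{prop:nonbinaryQ} (resp.\ Proposition~\ref{prop:attenuatedQ}): $q^{\alphaalt\betaalt}_{10,ij}\neq 0$ forces $(\alphaalt,\betaalt)-(i,j)$ to lie in $\{(1,0),(1,-1),(0,0),(-1,1),(-1,0)\}$ — together with $(0,1),(0,-1)$ in the attenuated case — while $q^{\alphaalt\betaalt}_{01,ij}\neq 0$ forces the difference into $\{(0,1),(0,0),(0,-1)\}$; each of these vectors is a permutation of $(0,0)$, $(1,-1)$, $(1,0)$, or $(-1,0)$, so $\beta$ is adjacent to $\alpha$. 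The required non-vanishing $q^{\alpha+\epsilon_i}_{\epsilon_i,\alpha}\neq 0$ for $\alpha+\epsilon_i\in\mathcal{D}$ is also part of those propositions.

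Condition (i) would be handled analogously: the bivariate $P$-polynomial structure of both families with respect to $\le_{\mathrm{grlex}}$ is supplied by Bernard et al.~\cite{bi} and Bannai et al.~\cite{BKZZ} (for the attenuated space scheme, \cite{bi} for $l\ge m$ and \cite{BKZZ} in general), and from the explicit intersection numbers $p^{\beta}_{\epsilon_i,\alpha}$ recorded there one checks that $p^{\beta}_{\epsilon_i,\alpha}\neq 0$ implies $\beta-\alpha$ lies in the same adjacency set, together with $p^{\alpha+\epsilon_i}_{\epsilon_i,\alpha}\neq 0$ whenever $\alpha+\epsilon_i\in\mathcal{D}$. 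Once (i) and (ii) hold, Definition~\ref{df:AMmultivariate} is met with $M=2$ and $N=k$ (resp.\ $N=m$), and Theorem~\ref{thm:AMLeonard} over $\mathbb{F}=\C$ yields the $A_2$-Leonard pair structure.

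The main obstacle is the $P$-polynomial half of condition (i). The adjacency of the nonzero Krein numbers is literally one of the propositions proved above, but the corresponding statement for the intersection numbers is not, so one must either cite the $p^{\beta}_{\epsilon_i,\alpha}$ from \cite{bi,BKZZ} in sufficient detail or recompute them by a calculation analogous to Sections~\ref{sec:proofThm1}--\ref{sec:proofThm2} with the first eigenmatrix in place of the second; in particular one must check that multiplication by the \emph{second} generator $A_{\epsilon_2}$ (not only $A_{\epsilon_1}$, which is essentially the ``type $(1,0)$'' direction of \cite{bi}) stays within adjacent indices.
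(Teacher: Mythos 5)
Your proposal is correct and takes essentially the same route as the paper: the paper also deduces the corollary by observing that for these parameters the schemes become $A_2$ bivariate $P$- and $Q$-polynomial in the sense of Definition~\ref{df:AMmultivariate} --- citing \cite{bi} and \cite{BKZZ} for the intersection-number ($P$-polynomial) side and Propositions~\ref{prop:nonbinaryQ} and \ref{prop:attenuatedQ} for the Krein-number side --- and then applying Theorem~\ref{thm:AMLeonard}. Your explicit identification of the parameter ranges giving the isosceles right triangle and your caveat that the adjacency of the nonzero intersection numbers is outsourced to \cite{bi,BKZZ} accurately reflect what the paper leaves implicit.
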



\begin{Rem}
Our Corollary~\ref{cor:multiLeonard} is interesting because we get examples 
satisfying the conditions of Problem 7.1 in Iliev-Terwilliger~\cite{IT2012}
(namely, 
Definition 7.1 in the present paper), where $H$ and $\tilde{H}$ actually 
come from association schemes, i.e., not just at the linear algebra level.
We expect that similar results can be obtained for association schemes 
of (not necessarily maximal) isotropic subspaces, and also for 
generalized Johnson association schemes in the sense of Ceccherini-Silbersten et al.~\cite{CST2006}. We hope to come back to these problems in the near future.     
\end{Rem}

\vspace{2ex}

\noindent
{\bf{Acknowledgments}}

\vspace{2ex}

\noindent
The research of the second author is supported by JSPS, Japan KAKENHI Grant Number JP20K03623.
The third author thanks Kyoto University and Shanghai Jiao Tong University.
The research of the fourth author is supported by National Natural Science Foundation of China No.~11801353 and the Fundamental Research Funds for the Central Universities.

\bibliographystyle{abbrv}
\bibliography{ASbib}

\end{document}